\tikzset{main node/.style={circle,fill=blue!20,draw,minimum size=0.8cm,inner sep=0pt},
            }
\tikzset{duplicate node/.style={circle,fill=blue!40,draw,minimum size=0.8cm,inner sep=0pt},
            }
\newcommand{\soc}{\mathrm{Socle}}
\newcommand{\ma}{\mathrm{maxAss}}
\newcommand{\lcm}{\mathrm{lcm}}
\newtheorem{theorem}{Theorem}[section]
\newtheorem{proposition}[theorem]{Proposition}
\newtheorem{lemma}[theorem]{Lemma}
\newtheorem{corollary}[theorem]{Corollary}
\newtheorem{definition}[theorem]{Definition}
\newtheorem{remark}[theorem]{Remark}
\newtheorem{conjecture}[theorem]{Conjecture}
\newtheorem{example}[theorem]{Example}
\renewcommand{\caption}[1]{\singlespacing\hangcaption{#1}\normalspacing}
\title {Splittings for symbolic powers of edge ideals of complete graphs}
\author[S.~M.~Cooper]{Susan M. Cooper}
\address{Department of Mathematics\\
University of Manitoba\\
520 Machray Hall\\
186 Dysart Road\\
Winnipeg, MB\\
Canada R3T 2N2}
\email{susan.cooper@umanitoba.ca}
\author[S.~Da Silva]{Sergio Da Silva}
\address{Department of Mathematics and Economics\\
Virginia State University\\
1 Hayden Drive\\
Petersburg, VA\\
USA 23806}
\email{sdasilva@vsu.edu, smd322@cornell.edu}
\author[M.~Gutkin]{Max Gutkin}
\address{Department of Mathematics\\
University of Manitoba\\
420 Machray Hall\\
186 Dysart Road\\
Winnipeg, MB\\
Canada R3T 2N2}
\email{gutkinm4@myumanitoba.ca}
\author[T.~Reimer]{Tessa Reimer}
\address{Department of Statistics\\
University of Manitoba\\
330 Machray Hall\\
186 Dysart Road\\
Winnipeg, MB\\
Canada R3T 2N2}
\email{tessa.reimer@umanitoba.ca}
\keywords{Betti numbers, Splittings, Symbolic powers, Edge ideals}
\subjclass[2010]{13D02; 13F55}
\thanks{Cooper was supported by the Natural Sciences and Engineering Research Council of Canada (NSERC).  Da Silva was supported in part by a PIMS postdoctoral fellowship at the University of Manitoba and an NSERC Postdoctoral Fellowship at McMaster University.  Gutkin was supported by a University of Manitoba Undergraduate Research Award and Reimer was supported by a University of Manitoba Student Union (UMSU) Undergraduate Research Award.}
\begin{document}

\begin{abstract}
In this paper we study the $s$-th symbolic powers of the edge ideals of complete graphs.  In particular, we provide a criterion for finding an Eliahou-Kervaire splitting on these ideals, and use the splitting to provide a description for the graded Betti numbers.  We also discuss the symbolic powers and graded Betti numbers of edge ideals of parallelizations of finite simple graphs.
\end{abstract}\bigskip

\maketitle

\section{Introduction}
Edge ideals of finite simple graphs provide a rich collection of ideals for which many aspects of commutative algebra can be reduced to a combinatorial description (see \cite{Adam13}). For example, a primary decomposition of the edge ideal can be described using the facets of a simplicial complex via Stanley-Reisner theory. Graph-theoretic concepts such as graph colourings and vertex covers are also frequently used. It is natural to generalize this theory to other ideals which have similar combinatorial descriptions. Working with powers of square-free monomial ideals is one such direction (see \cite{Herzog-Hibi} for example). 

Another natural extension of this viewpoint is to consider symbolic powers of edge ideals of graphs.  This would serve a two-fold purpose of retaining some of the combinatorics enjoyed by edge ideals while also shedding light on properties of symbolic powers.  Indeed, symbolic powers have played a central role in many problems, yet their invariants are quite challenging to determine even when restricted to the family of square-free monomial ideals.  The Waldschmidt constant and resurgence number for certain cases were studied in \cite{GHOS18}, and questions about the symbolic Rees algebra were considered in \cite{Bahiano04}. In this article we study the graded Betti numbers for symbolic powers of edge ideals.  In particular, with a focus on complete graphs, we highlight the fruitfulness of using Eliahou-Kervaire splittings as a way to obtain information about the graded Betti numbers of a symbolic power of an ideal.

The symbolic power of a monomial ideal $I$ has a convenient description in terms of its primary decomposition, and is easier to describe compared to more general homogeneous ideals. Let $I\subset R = k[x_1, \ldots, x_m]$ be a monomial ideal, where $k$ is a field. Suppose that $I=I_1\cap\dots\cap I_r$ is a primary decomposition for $I$. Given a maximal associated prime ideal $Q$ of $I$ and a positive integer $s$, we define

\[I_{\subseteq Q} = \bigcap_{\sqrt{I_\ell}\subseteq Q}I_\ell\]

\noindent so that the $s$-th {\bf symbolic power} of $I$ is 

\[I^{(s)} = \bigcap_{Q\in\ma(I)}I_{\subseteq Q}^s,\]
where $\sqrt{I_\ell} = \{r \in R \mid r^n \in I_\ell \,\,\, \text{for some positive integer $n$}\}$ denotes the radical of $I_\ell$ and $\ma(I)$ denotes the set of associated primes of $I$ that are maximal with respect to inclusion.

This definition doesn't depend on the primary decomposition since $I_{\subseteq Q} =  R\cap IR_Q$ (see \cite{CEHH}). In Sections \ref{complete} and \ref{parallel} we provide a description of the minimal monomial generating set for $I(G)^{(s)}$ when $G$ is a complete graph or a parallelization of a finite simple graph. These descriptions are necessary to study Eliahou-Kervaire splittings for symbolic powers of edge ideals, which in turn allow us to compute graded Betti numbers recursively. Theorem \ref{further splittings} provides a criterion for defining an Eliahou-Kervaire splitting in this context. A similar technique is employed for a specific family of monomial ideals found in \cite{Valla05}.

Having a method to determine the graded Betti numbers of symbolic powers of edge ideals allows one to readily obtain information of related invariants of interest.  We demonstrate this for the minimum socle degree in Section \ref{socle}.  It is known that symbolic powers of edge ideals of complete graphs are Cohen-Macaulay and have dimension 1 (see Lemma 3.15 for the relevant citations).  Projectively these are fat points, and this implies that Section \ref{complete} is studying zero-dimensional arithmetically Cohen-Macaulay schemes, which complements the content found in \cite{Tohaneanu-VanTuyl}. 

Finally, a discussion about the graded Betti numbers for graph parallelizations of finite simple graphs can be found in Section \ref{parallel}. The definition of a graph parallelization is given there, but the utility of this construction comes from the ability to define families of graphs with properties that are related to the original graph (which can be useful for finding examples or counter-examples).

The arguments in this article work over any field $k$. While there exist some subtleties when working over fields of positive characteristic (for instance, \cite[Example 4.2]{FHV08} illustrates a Betti splitting which fails to be a splitting except in characteristic 2), our arguments rely on a particular splitting map which is characteristic-free.

\section{Preliminaries}

There is a rich theory involving edge ideals and their combinatorial properties.  We refer the reader to \cite{Adam13} for a thorough overview of this theory.  Throughout the paper, $G$ will denote an undirected finite simple graph with vertex set $V(G)=\{x_1,\ldots,x_m\}$ and edge set $E(G)$.  Recall that $G$ is {\bf simple} if it does not have multiple edges between vertices and if it does not have any loops at vertices.  Let $k$ be a field and $R=k[x_1,\ldots,x_m]$.  The {\bf edge ideal} of $G$ is defined as the square-free monomial ideal $I(G)=\langle x_ix_j: \{x_i,x_j\}\in E(G)\rangle \subset R$.

We mainly focus on complete graphs in this paper.  Recall that the {\bf complete graph} on $m$ vertices, denoted $K_m$, is the simple undirected graph for which every pair of distinct vertices is connected by exactly one unique edge.

In the sections that follow, we will also consider subgraphs of graphs. An \textbf{induced subgraph} $H$ of $G$ is a graph with vertex set $V(H)\subseteq V(G)$ and edge set $E(H)=E(G)\cap[V(H)]^2$. That is, if $u,v\in V(H)$, then $u$ and $v$ are adjacent in $H$ if and only if they are adjacent in $G$. For example, the complete graph $K_3$ is an induced subgraph of $K_4$, whereas the subgraph with 4 vertices but no edges is not.

Determining invariants of symbolic powers of homogeneous ideals in the polynomial ring can be quite challenging.  Recall from the introduction that we can define the $s$-th symbolic power of a monomial ideal $I$ in terms of a primary decomposition of $I$.  When $I$ is the edge ideal of a graph $G$, this becomes tractable via vertex covers of $G$.  A subset $W \subseteq V(G)$ is said to be a {\bf vertex cover} if $W \cap e \neq \emptyset$ for all $e \in E(G)$. A vertex cover $W$ is a {\bf minimal vertex cover} if no proper subset of $W$ is a vertex cover of $G$.  We have the following useful connection between vertex covers and $I(G)$.

\begin{lemma}[{\cite[Theorem 1.34, Corollary 1.35]{Adam13}}]\label{vertexcovers}
  Let $W_1, \ldots, W_t$ be the minimal vertex covers of a graph $G$.  If we set $\langle W_i \rangle = \langle x_j \mid x_j \in W_i \rangle$, then $I(G) = \langle W_1 \rangle \cap \cdots \cap \langle W_t \rangle$ is the minimal primary decomposition of $I(G)$.
\end{lemma}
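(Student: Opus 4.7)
The plan is to prove Lemma \ref{vertexcovers} in two stages: first establish the ideal equality $I(G)=\bigcap_{i=1}^{t}\langle W_i\rangle$, and then verify that this primary decomposition is minimal. Observe at the outset that each $\langle W_i\rangle$ is generated by a subset of $\{x_1,\dots,x_m\}$, hence is prime; so once the equality holds, the decomposition is automatically primary (with each primary component equal to its own radical).

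First I would tackle the containment $I(G)\subseteq \bigcap_{i=1}^t\langle W_i\rangle$. It suffices to show each generator $x_ax_b$ (with $\{x_a,x_b\}\in E(G)$) lies in every $\langle W_\ell\rangle$. By the vertex cover property, at least one of $x_a,x_b$ lies in $W_\ell$, so $x_ax_b\in \langle W_\ell\rangle$. For the reverse containment, since all ideals in sight are monomial, it suffices to treat a monomial $m\in \bigcap_i\langle W_i\rangle$. Let $\mathrm{supp}(m)=\{x_j : x_j \mid m\}$. If $m\notin I(G)$, then no pair of variables dividing $m$ forms an edge, i.e.\ $\mathrm{supp}(m)$ is an independent set. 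But then $V(G)\setminus \mathrm{supp}(m)$ meets every edge of $G$ and so is a vertex cover; it contains some minimal vertex cover $W_i$, which by construction is disjoint from $\mathrm{supp}(m)$. No variable of $W_i$ divides $m$, contradicting $m\in\langle W_i\rangle$.

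For minimality of the decomposition, I need two things: that the radicals of the components are pairwise distinct, and that no component can be omitted. The first is immediate since $\sqrt{\langle W_i\rangle}=\langle W_i\rangle$ and the $W_i$ are distinct subsets of $V(G)$. For the second, I would exhibit, for each fixed $i$, the witness monomial $m_i=\prod_{x_j\notin W_i}x_j$. Clearly $m_i\notin\langle W_i\rangle$ since no variable of $W_i$ divides $m_i$. On the other hand, for any $j\neq i$, minimality prevents $W_j\subseteq W_i$ (otherwise the two coincide), so there exists $x_k\in W_j\setminus W_i$, and $x_k\mid m_i$, giving $m_i\in\langle W_j\rangle$. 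Hence $m_i\in\bigcap_{j\neq i}\langle W_j\rangle\setminus\langle W_i\rangle$, so the $i$-th component is irredundant.

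The main obstacle is the reverse containment, which hinges on the combinatorial duality between independent sets and vertex covers: a monomial fails to lie in $I(G)$ precisely when its support is independent, and independence of a set is equivalent to its complement being a vertex cover. Once this dictionary is in place, the rest of the argument is a straightforward unwinding of definitions, and the explicit witness $m_i$ handles minimality cleanly without invoking any general Krull/Noether machinery.
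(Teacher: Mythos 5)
Your proof is correct. The paper does not actually prove this lemma---it is cited directly to \cite{Adam13}---so there is no in-paper argument to compare against, but your self-contained proof follows the standard line of reasoning and all the steps check out. The containment $I(G)\subseteq\bigcap_i\langle W_i\rangle$ is immediate from the vertex-cover property; the reverse containment correctly exploits the duality that a monomial lies outside $I(G)$ exactly when its support is an independent set, whose complement is then a vertex cover containing some minimal $W_i$ disjoint from the support; and the reduction to monomials is justified since both sides are monomial ideals. The minimality argument is clean: the components are prime (hence primary) with pairwise distinct radicals, and your witness monomial $m_i=\prod_{x_j\notin W_i}x_j$ does lie in every $\langle W_j\rangle$ for $j\neq i$ (because minimality of $W_i$ forces $W_j\setminus W_i\neq\emptyset$) while avoiding $\langle W_i\rangle$, establishing irredundancy. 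One small remark for completeness: the edge case $E(G)=\emptyset$ (so $t=1$, $W_1=\emptyset$, $\langle W_1\rangle=(0)$, and $m_1$ is the product over all vertices) is handled vacuously, and since $G$ is simple there are no loops, so ``support is independent'' really is equivalent to ``no generator $x_ax_b$ divides $m$''.
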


One of the main goals of this paper is to investigate the use of a splitting technique due to Eliahou and Kervaire (in particular, a splitting for the symbolic powers of $I(G)$) to reduce the determination of graded Betti numbers to an induction on much simpler base cases.   We denote the set of minimal monomial generators of a monomial ideal $I\subset R$ (which are unique) by $\mathcal{G}(I)$, and recall the following definition from \cite{FHV08}.

\begin{definition}\label{EKdef}
	Let $I, J$ and $K$ be monomial ideals of $R = k[x_1,\ldots,x_m]$ such that $\mathcal{G}(I)$ is the disjoint union of $\mathcal{G}(J)$, $\mathcal{G}(K)$. We call $I=J+K$ an \textbf{Eliahou-Kervaire splitting} (or {\bf E-K splitting}) if there exists a splitting function $\mathcal{G}(J\cap K)\longrightarrow \mathcal{G}(J)\times \mathcal{G}(K)$ sending $w\mapsto (\phi(w), \varphi(w))$ such that:
	
	\begin{enumerate}
		\item $w=\lcm(\phi(w),\varphi(w))$; and
		\item for every subset $S\subset \mathcal{G}(J \cap K)$, both $\lcm(\phi(S))$ and $\lcm(\varphi(S))$ strictly divide $\lcm(S)$.
	\end{enumerate}
\end{definition}

\noindent One benefit of having an Eliahou-Kervaire splitting is the ability to compute the graded Betti numbers of $I$ in terms of the graded Betti numbers for $J$ and $K$. Recall that the {\bf $i,j$-th graded Betti number} of $I$ is by definition $\beta_{i,j}(I) = \dim_k\text{Tor}_i(k,I)_j$. That is, $\beta_{i,j}(I)$ is the number of
copies of $R(-j)$ appearing in the $i$-th module of the graded minimal free resolution of $I$:

$$0 \rightarrow \bigoplus_j R(-j)^{\beta_{\ell, j}(I)} \rightarrow \cdots \rightarrow \bigoplus_j R(-j)^{\beta_{1, j}(I)} \rightarrow \bigoplus_j R(-j)^{\beta_{0, j}(I)} \rightarrow I \rightarrow 0,$$
where $R(-j)$ is the polynomial ring $R$ shifted by degree $j$.

\begin{lemma}[{\cite[Proposition 3.2]{Fatabbi01}}]\label{EK_Betti}
Let $I, J$ and $K$ be monomial ideals of $k[x_1,\ldots,x_m]$ such that $I=J+K$ is an Eliahou-Kervaire splitting. Then 
\[\beta_{i,j}(I)=\beta_{i,j}(J)+\beta_{i,j}(K)+\beta_{i-1,j}(J\cap K),\]
for all $i\in\mathbb{N}$ and multidegrees $j$.
\end{lemma}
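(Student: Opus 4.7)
My plan is to exploit the Mayer--Vietoris-style short exact sequence
\[0 \to J\cap K \xrightarrow{\iota} J \oplus K \xrightarrow{\pi} I \to 0,\]
where $\iota(w) = (w,-w)$ and $\pi(a,b) = a+b$. Applying $\Tor_{\bullet}(-,k)$ gives, for each multidegree $j$, a long exact sequence
\[\cdots \to \Tor_i(J\cap K,k)_j \xrightarrow{\iota_*} \Tor_i(J,k)_j \oplus \Tor_i(K,k)_j \to \Tor_i(I,k)_j \xrightarrow{\delta_{i,j}} \Tor_{i-1}(J\cap K,k)_j \to \cdots\]
From exactness alone (working in each multidegree, counting dimensions alternately) one immediately obtains the inequality $\beta_{i,j}(I) \le \beta_{i,j}(J) + \beta_{i,j}(K) + \beta_{i-1,j}(J\cap K)$. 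So the whole point is to upgrade this inequality to an equality, which happens if and only if every connecting map $\delta_{i,j}$ vanishes, equivalently, if and only if $\iota_*$ is injective in every homological degree and every multidegree.

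To prove $\iota_*$ is injective, I would pass to Taylor resolutions. The Taylor complex $T(J\cap K)$ is a multigraded (non-minimal in general) free resolution of $J\cap K$, whose component in multidegree $j$ and homological degree $i$ is spanned by subsets $S\subseteq \mathcal{G}(J\cap K)$ with $|S|=i+1$ and $\lcm(S) = x^j$. The splitting function $w \mapsto (\phi(w),\varphi(w))$ given in Definition \ref{EKdef} extends by taking subsets to a pair of maps $T(J\cap K) \to T(J)$ and $T(J\cap K) \to T(K)$; condition (1) forces these maps to respect the multigrading, since $w = \lcm(\phi(w),\varphi(w))$ places the images in the correct graded piece. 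Tensoring with $k$ and taking homology then produces a natural splitting-type map compatible with $\iota_*$.

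The main obstacle is to show that condition (2) really does force $\iota_*$ to be injective on homology, and not merely well defined at the chain level. The plan for this step is a contrapositive argument: any nonzero class in $\Tor_i(J\cap K,k)_j$ is represented, after passage to a minimal summand of the Taylor complex, by a cycle supported on some subset $S\subseteq \mathcal{G}(J\cap K)$ with $\lcm(S) = x^j$; if $\iota_*$ killed this class, the images $\phi(S)$ and $\varphi(S)$ would have to produce boundaries in $T(J)\otimes k$ and $T(K)\otimes k$ in the same multidegree $j$, which would require $\lcm(\phi(S)) = x^j$ or $\lcm(\varphi(S)) = x^j$, directly contradicting the strict divisibility demanded by condition (2). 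Once $\iota_*$ is injective, each connecting map $\delta_{i,j}$ vanishes, the long exact sequence breaks into short exact sequences, and adding dimensions of $k$-vector spaces in each multidegree yields the desired formula.
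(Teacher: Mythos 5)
Your overall framework is the right one and follows the same route as Fatabbi (and Eliahou--Kervaire for total Betti numbers): apply $\Tor(-,k)$ to the short exact sequence $0 \to J\cap K \to J\oplus K \to I \to 0$ and use condition (2), via Taylor complexes, to control the resulting maps. However, the crucial algebraic condition in your plan is stated exactly backwards, and this inverts the entire argument. The desired formula holds for every $i$ if and only if the maps $\iota_*\colon \Tor_i(J\cap K,k)_j\to\Tor_i(J,k)_j\oplus\Tor_i(K,k)_j$ are \emph{zero}, not injective. Running the dimension count in the long exact sequence gives, in each multidegree $j$,
\[
\beta_{i,j}(I)=\beta_{i,j}(J)+\beta_{i,j}(K)-\dim_k\im(\iota_{*,i})+\dim_k\ker(\iota_{*,i-1}),
\]
and since $\dim_k\ker(\iota_{*,i-1})\le\beta_{i-1,j}(J\cap K)$, the target equality for all $i$ forces $\im(\iota_{*,i})=0$ and $\ker(\iota_{*,i-1})=\Tor_{i-1}(J\cap K,k)_j$ simultaneously; that is, every $\iota_*$ must vanish. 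Had $\iota_*$ instead been injective (equivalently, $\delta_{i,j}=0$, as you correctly note), the long exact sequence would yield the inclusion--exclusion identity $\beta_{i,j}(I)=\beta_{i,j}(J)+\beta_{i,j}(K)-\beta_{i,j}(J\cap K)$, which is a different and in general false statement.

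Once the goal is corrected to ``$\iota_*$ is zero,'' condition (2) does the work for the opposite reason to the one you propose. Using the splitting function, one lifts the inclusion $J\cap K\hookrightarrow J$ to a chain map $T(J\cap K)\to T(J)$ of Taylor complexes in which the image of the basis element $e_S$ (of multidegree $\lcm(S)$) lands on a basis element of multidegree $\lcm(\phi(S))$ carrying a monomial coefficient of multidegree $\lcm(S)/\lcm(\phi(S))$. Condition (2) guarantees this coefficient is a nonconstant monomial, so it lies in $\mathfrak m = \langle x_1,\dots,x_m\rangle$ and dies upon tensoring with $k=R/\mathfrak m$; the induced map on $\Tor$ is therefore identically zero. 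Your closing contrapositive paragraph is thus arguing the wrong direction: the strict divisibility in condition (2) does not protect classes from being killed by $\iota_*$; it is exactly what kills them.
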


\begin{remark} 
The original version of this result was proved for total Betti numbers in \cite[Proposition 3.1]{EK90} over an arbitrary field. The proof of Lemma \ref{EK_Betti} is  also valid over any field, even if the author of \cite{Fatabbi01} assumes that the field is algebraically closed (which is needed later in \cite{Fatabbi01}).
\end{remark}

All E-K splittings are examples of Betti splittings, which are a choice of monomial ideals $I=J+K$ where $\mathcal{G}(I) = \mathcal{G}(J) \sqcup\mathcal{G}(K)$ which also satisfy the graded Betti number equality from Lemma \ref{EK_Betti}. The distinction will not be important for us since all of the splittings in the sections that follow are E-K splittings. See \cite{FHV08} for more information on Betti splittings. We finish with a simple auxiliary lemma, which we leave as an exercise.

\begin{lemma}\label{recursive}
If $I\subset k[x_1,\ldots,x_m]$ is a monomial ideal, then $\beta_{i,j}(x_{\ell}I)=\beta_{i,j-1}(I)$ for all $i,j\geq 1$, $1\leq \ell \leq m$. 
\end{lemma}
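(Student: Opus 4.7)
The plan is to exhibit a graded $R$-module isomorphism $x_\ell I \cong I(-1)$ and then read off Betti numbers from a shifted minimal free resolution. Define $\phi \colon I(-1) \to x_\ell I$ by $\phi(f) = x_\ell f$. This is an $R$-module homomorphism, and it is injective because $R$ is a domain (so multiplication by the nonzero element $x_\ell$ has no kernel). By the definition of $x_\ell I$ as the image of this multiplication map, $\phi$ is surjective onto $x_\ell I$. Tracking degrees: an element $f \in I$ of degree $d$ lives in $(I(-1))_{d+1}$ and maps to $x_\ell f \in (x_\ell I)_{d+1}$, so $\phi$ respects the grading. Hence $\phi$ is a graded isomorphism.

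Next, I would transport a minimal graded free resolution. Let
\[
\cdots \to \bigoplus_j R(-j)^{\beta_{i,j}(I)} \to \cdots \to \bigoplus_j R(-j)^{\beta_{0,j}(I)} \to I \to 0
\]
be a minimal graded free resolution of $I$. Shifting every module by $-1$ produces an exact complex
\[
\cdots \to \bigoplus_j R(-j-1)^{\beta_{i,j}(I)} \to \cdots \to \bigoplus_j R(-j-1)^{\beta_{0,j}(I)} \to I(-1) \to 0,
\]
which is still minimal (the differentials have the same matrix entries, and minimality is equivalent to all entries lying in the maximal ideal). Composing the augmentation with the isomorphism $I(-1) \xrightarrow{\sim} x_\ell I$ yields a minimal graded free resolution of $x_\ell I$. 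Reindexing $j' = j+1$, the copies of $R(-j')$ appearing in homological degree $i$ number exactly $\beta_{i,j'-1}(I)$, which by definition of Betti numbers gives $\beta_{i,j}(x_\ell I) = \beta_{i,j-1}(I)$ for all $i,j \geq 1$.

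There is no real obstacle here; the only thing to be careful about is that the shift $I(-1)$ must be interpreted correctly so that $\phi$ becomes a degree-zero map, and that shifting a minimal resolution preserves minimality (both are standard). One could alternatively phrase the argument using Tor: a graded isomorphism $x_\ell I \cong I(-1)$ induces $\Tor_i^R(k, x_\ell I) \cong \Tor_i^R(k, I(-1)) \cong \Tor_i^R(k, I)(-1)$, so taking graded dimensions yields $\beta_{i,j}(x_\ell I) = \beta_{i,j-1}(I)$.
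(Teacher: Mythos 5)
Your proof is correct. The paper explicitly leaves this lemma as an exercise without supplying a proof, so there is nothing to compare against; your argument via the graded isomorphism $I(-1) \xrightarrow{\sim} x_\ell I$ (using that $R$ is a domain for injectivity) followed by transporting a minimal resolution, or equivalently the Tor reformulation you sketch at the end, is exactly the standard way to establish it.
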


\section{Symbolic Powers Of Edge Ideals Of Complete Graphs}\label{complete}

We denote the complete graph on $m$ vertices by $K_m$ and label its vertices by $x_1, \ldots, x_m$.  Fix $R = k[x_1, \ldots, x_m]$.  Our main goal is to determine the graded Betti numbers for the symbolic powers of the edge ideal of $K_m$.  In order to make use of the Eliahou-Kervaire splitting technique, we need a convenient description for the minimal monomial generators of symbolic powers of $I(K_m)$.  The following lemma will simplify this task.

\begin{lemma}[{\cite[Lemma 2.6]{Wald15}}]\label{SymbolicGens}
Let $I \subset R = k[x_1,\ldots,x_m]$ be a square-free monomial ideal with minimal primary decomposition $I=P_1\cap\hspace{0.25mm}\cdots\hspace{0.25mm}\cap P_n$ with $P_{\ell}=\langle x_{j_1},\ldots,x_{j_{\alpha_{\ell}}}\rangle$ for $\ell=1,\ldots,n$. Then $x_1^{a_1}\cdots x_m^{a_m}\in I^{(s)}$ if and only if $a_{j_1}+\cdots +a_{j_{\alpha_{\ell}}}\geq s $ for $\ell=1,\ldots,n$.
\end{lemma}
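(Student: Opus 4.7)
The plan is to unpack the definition of $I^{(s)}$ given in the introduction for the special case of a square-free monomial ideal, and then reduce membership in $I^{(s)}$ to a simple exponent inequality for each prime $P_\ell$.

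First, I would observe that since $I$ is square-free, its primary decomposition $I=P_1\cap\cdots\cap P_n$ has each $P_\ell$ prime. Because the decomposition is minimal, no $P_\ell$ is properly contained in any other $P_j$ (otherwise the larger one would be redundant in the intersection). In particular, the set of associated primes $\{P_1,\ldots,P_n\}$ is an antichain under inclusion, so $\ma(I)=\{P_1,\ldots,P_n\}$. Applied to the definition
\[I_{\subseteq Q}=\bigcap_{\sqrt{I_\ell}\subseteq Q}I_\ell,\]
this antichain property forces $I_{\subseteq P_j}=P_j$ for each $j$, since $P_j$ is the only prime in the list contained in $P_j$. Therefore
\[I^{(s)}=\bigcap_{j=1}^{n}P_j^{\,s}.\]

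Next, I would prove the key local statement: for a prime monomial ideal $P=\langle x_{i_1},\ldots,x_{i_\alpha}\rangle$, a monomial $x_1^{a_1}\cdots x_m^{a_m}$ lies in $P^{s}$ if and only if $a_{i_1}+\cdots+a_{i_\alpha}\ge s$. The forward direction follows because $P^{s}$ is generated as an $R$-ideal by products of $s$ variables from $\{x_{i_1},\ldots,x_{i_\alpha}\}$; if a monomial is divisible by such a product, then the sum of exponents of the variables in $P$ must be at least $s$. Conversely, if $a_{i_1}+\cdots+a_{i_\alpha}\ge s$, we can explicitly extract a product of $s$ variables (with multiplicity) from $x_{i_1}^{a_{i_1}}\cdots x_{i_\alpha}^{a_{i_\alpha}}$, exhibiting the monomial as a multiple of a generator of $P^{s}$.

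Combining these two steps, a monomial lies in $\bigcap_{\ell=1}^{n}P_\ell^{\,s}$ if and only if it lies in each $P_\ell^{\,s}$, which by the local characterization is exactly the inequality $a_{j_1}+\cdots+a_{j_{\alpha_\ell}}\ge s$ for every $\ell=1,\ldots,n$. This gives the desired equivalence.

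The main obstacle is really just the first step: justifying that for square-free monomial ideals one has no embedded primes and that the primes in the minimal primary decomposition are pairwise incomparable, so that the somewhat technical definition of $I_{\subseteq Q}$ collapses to $I_{\subseteq P_j}=P_j$. Once that collapse is established, the rest is a standard exponent-counting argument about powers of monomial primes.
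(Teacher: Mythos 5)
The paper does not prove this lemma; it simply cites it as \cite[Lemma 2.6]{Wald15}, so there is no in-paper proof to compare against. Your argument is correct and is the standard one: you correctly use that a square-free monomial ideal is radical, so its primary components are the (pairwise incomparable) minimal primes themselves, which collapses the definition of $I^{(s)}$ to $\bigcap_\ell P_\ell^s$; and your exponent-counting characterization of membership in $P^s$ for a monomial prime $P$ is exactly right. One small point worth stating more carefully: the claim that minimality forces the $P_\ell$ to be pairwise incomparable relies on the fact that here each primary component \emph{equals} its radical (so a containment $P_i \subsetneq P_j$ would make $P_j$ literally redundant in the intersection); for a general minimal primary decomposition, incomparability of the associated primes is not automatic, so it is the radicality of $I$ that is doing the work, as you indicate.
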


We now determine the minimal monomial generating set $\mathcal{G}(I(K_m)^{(s)})$ for the $s$-th symbolic power of $I(K_m) \subset R$.

\begin{proposition} \label{minGens}
If $I=I(K_m) \subset R$ and $s \geq 2$, then $I^{(s)}$ has minimal monomial generating set 
$$\mathcal{L} :=\{x_1^{a_1}\cdots x_m^{a_m}: \exists \,\, 1 \leq i \leq m \,\, \text{with} \,\, \sum_{j \not = i}a_j = s, a_i = \max_{j \not = i} \{a_j\}\}.$$
That is, $\mathcal{G}(I^{(s)}) = \mathcal{L}$.
\end{proposition}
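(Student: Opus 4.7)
The plan is to combine Lemma \ref{vertexcovers} with Lemma \ref{SymbolicGens} to translate the symbolic-power membership condition into an elementary numerical inequality on the exponent vector, and then to establish the two inclusions $\mathcal{L}\subseteq\mathcal{G}(I^{(s)})$ and $\mathcal{G}(I^{(s)})\subseteq\mathcal{L}$ by analyzing which exponents may be decreased while remaining in $I^{(s)}$. Because every two distinct vertices of $K_m$ are adjacent, the only independent sets of $K_m$ are singletons, so the minimal vertex covers are exactly the sets $V(K_m)\setminus\{x_i\}$. Lemma \ref{vertexcovers} then gives the primary decomposition $I(K_m)=\bigcap_{i=1}^{m}\langle x_j:j\neq i\rangle$, and Lemma \ref{SymbolicGens} converts membership $x_1^{a_1}\cdots x_m^{a_m}\in I(K_m)^{(s)}$ into the condition $\sum_{j\neq i}a_j\geq s$ for every $i$. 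Writing $S=a_1+\cdots+a_m$, this reduces to the single inequality $\max_i a_i\leq S-s$.

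For the inclusion $\mathcal{L}\subseteq\mathcal{G}(I^{(s)})$, I would take $u=x_1^{a_1}\cdots x_m^{a_m}\in\mathcal{L}$ witnessed by an index $i_0$. The hypothesis $a_{i_0}=\max_{j\neq i_0}a_j$ forces the global maximum of $(a_1,\dots,a_m)$ to be attained at $i_0$ and also at some other index $i_1$, while $\sum_{j\neq i_0}a_j=s$ rewrites as $a_{i_0}=S-s$; together these give $u\in I^{(s)}$. For minimality, suppose toward contradiction that $u/x_k\in I^{(s)}$ for some $k$ with $a_k>0$. Since the maximum is attained at the two distinct indices $i_0$ and $i_1$, at least one of them, call it $i$, differs from $k$; the partial sum of $u/x_k$ at $i$ is then $\sum_{j\neq i}a_j-1=s-1<s$, which contradicts membership.

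For the reverse inclusion, let $u=x_1^{a_1}\cdots x_m^{a_m}$ be any minimal generator, set $M=\max_i a_i$, and let $T=\{i:a_i=M\}$. The membership inequality gives $M\leq S-s$. If $M<S-s$, then dividing out any variable appearing in $u$ leaves the new maximum at most $M\leq S-1-s$, contradicting minimality. So $M=S-s$. If $|T|=1$, say $T=\{i_0\}$, then after dividing by $x_{i_0}$ the new maximum is $M-1$ and the new sum is $S-1$, so $M-1\leq(S-1)-s$ still holds and $u/x_{i_0}\in I^{(s)}$, again contradicting minimality. Hence $|T|\geq 2$, and for any $i\in T$ we have $\sum_{j\neq i}a_j=S-M=s$ together with $a_i=M=\max_{j\neq i}a_j$ (because $T\setminus\{i\}$ is nonempty), placing $u\in\mathcal{L}$. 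The main subtlety, rather than a genuine obstacle, is recognizing that the condition ``$a_i=\max_{j\neq i}a_j$'' in the definition of $\mathcal{L}$ encodes precisely that the global maximum of the exponent vector is attained at least twice; this is exactly the obstruction that prevents one from shrinking an exponent at a unique maximizer, and is what makes the index $i_0$ in the definition of $\mathcal{L}$ the right witness.
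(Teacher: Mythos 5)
Your proof is correct, and it takes a genuinely different (and arguably cleaner) route than the paper. The paper proceeds in three steps: it checks $\mathcal{L}\subseteq I^{(s)}$ directly, asserts $I^{(s)}\subseteq\langle\mathcal{L}\rangle$ rather briefly, and then verifies minimality by the somewhat lengthy computation that $\mathcal{L}$ is an antichain under divisibility (if $x,y\in\mathcal{L}$ and $x\mid y$ then $x=y$). You instead first compress the criterion of Lemma \ref{SymbolicGens} into the single inequality $\max_i a_i\le S-s$ (with $S=\sum a_i$), observe that membership in $\mathcal{L}$ is equivalent to ``$\max_i a_i=S-s$ and the maximum is attained at least twice,'' and then characterize minimal generators directly by the shrink test: an element of $\mathcal{L}$ cannot have any variable divided out because one of the two maximizers survives and its complementary sum drops to $s-1$, while any monomial of $I^{(s)}$ with $\max<S-s$, or with $\max=S-s$ attained only once, admits a strict divisor still in $I^{(s)}$. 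This establishes both inclusions $\mathcal{L}\subseteq\mathcal{G}(I^{(s)})$ and $\mathcal{G}(I^{(s)})\subseteq\mathcal{L}$ with one unified numerical lens, whereas the paper keeps ``generation'' and ``minimality'' as separate verifications. What your approach buys is a shorter, more transparent argument, with the antichain property falling out for free; what the paper's buys is an explicit statement that $\mathcal{L}$ generates $I^{(s)}$, which it later leans on when setting up the splittings. One small stylistic point: in your minimality argument for $\mathcal{L}\subseteq\mathcal{G}(I^{(s)})$ it would be worth stating explicitly that $\sum_{j\neq i}a_j=s$ holds for \emph{both} maximizing indices $i\in\{i_0,i_1\}$ (since $a_{i_0}=a_{i_1}=S-s$), as that equality is what you invoke when you pick whichever of $i_0,i_1$ differs from $k$.
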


\begin{proof}
Let $x=x_1^{a_1}\cdots x_m^{a_m}\in \mathcal{L}$ where $a_i\geq 0$ for each $1 \leq i \leq m$. Without loss of generality, suppose that $a_1+\cdots +a_{m-1}=s$ and $a_m=\max\{a_1,\ldots,a_{m-1}\}$.  Note that any choice of $m-1$ vertices of $K_m$ defines a minimal vertex cover for $K_m$, and so by Lemma \ref{vertexcovers} we can write $I = \bigcap_{i=1}^m \langle x_1,\ldots,\hat{x_i},\ldots,x_m\rangle$,
which is the minimal primary decomposition for $I$. Then, by Lemma \ref{SymbolicGens}, $x\in I^{(s)}$ since every subset of $\{a_1,\ldots,a_m\}$ of size $m-1$ sums to a value larger than or equal to $s$.

Conversely, suppose that $x=x_1^{a_1}\cdots x_m^{a_m}\in I^{(s)}$. Without loss of generality, we may assume that $a_m=\max\{a_1,\ldots,a_m\}$ so that $a_1+\cdots+a_{m-1}\geq s$ by Lemma \ref{SymbolicGens}. Then $x$ is clearly in the ideal $\langle \mathcal{L} \rangle$, proving that $\mathcal{L}$ is a generating set for $I^{(s)}$.

To see that $\mathcal{L} = \mathcal{G}(I^{(s)})$, suppose $x=x_1^{a_1}\cdots x_m^{a_m}$ and $y=x_1^{b_1}\cdots x_m^{b_m}$ are both monomials in $\mathcal{L}$ such that $x$ divides $y$ (i.e. $b_t \geq a_t$ for $t=1,\ldots,m$). Suppose $1\leq i,j\leq m$ are such that
$$a_1+\cdots +a_m-a_i = b_1+\cdots +b_m-b_j=s \,\, \text{with} \,\, a_i=\max\{a_1,\ldots, a_m\}, \,\, b_j=\max\{b_1,\ldots,b_m\}.$$
Then $b_j\geq b_i\geq a_i$ and we can write $b_j -a_i = c \geq 0$. Thus,
$$b_1+\cdots +b_m = s+b_j = s+a_i+c = a_1+\cdots+a_m+c.$$

\noindent Therefore, 
$$c = \sum_{\ell=1}^m (b_{\ell} -a_{\ell}) =  (b_j-a_i) + (b_i-a_j) +  \sum_{\ell \neq i,j}^m (b_{\ell} -a_{\ell}) \implies  (b_i-a_j)+  \sum_{\ell \neq i,j}^m (b_{\ell} -a_{\ell}) = 0.$$

Note that since $a_i=\max\{a_1,\ldots,a_m\}$, we have that $a_j\leq a_i\leq b_i$ and so $b_i-a_j\geq 0$. Then each term in the previous equation is non-negative showing that $a_{\ell}=b_{\ell}$ for $\ell \neq i,j$ and $b_i=a_j$.  Since $b_j = \max\{b_1, \ldots, b_m\}$, there must be some $t \not = j$ such that $b_t = b_j$ (by the definition of $\mathcal{L}$).  Now if $b_j \not = b_i$, then $t \not = i$ and $a_t = b_t = b_j > b_i \geq a_i=\max\{a_1,\ldots, a_m\}$, a contradiction. Thus, $b_j = b_i = a_j$. We conclude that $x=y$, as required.
\end{proof}

\subsection{E-K Splittings}

We are now in a position to use splittings to determine the graded Betti numbers of symbolic powers of the edge ideal of a complete graph $K_m$.  As before, we let $R= k[x_1,\ldots,x_m]$.  Observe that if we set $G=K_m$ and fix $0\leq r \leq m$, then we can view $H=K_r$ as an induced subgraph of $K_m$ where $V(K_r) = \{x_1,\ldots, x_r\}$ (here $K_0$ is the null subgraph and $I(K_0)$ is the zero ideal). 

\begin{definition}
Let $G = K_m$ and $H = K_r$ for some fixed $0 \leq r \leq m$.  If $s \geq 2$ is an integer and $r \not = m$, then
\[ I_{H,s}   = \langle w\in \mathcal{G}(I(G)^{(s)}):x_i\nmid w, i=r+1,\ldots,m   \rangle \,\,\, \text{and} \,\,\, I_{G\setminus H,s} = I(G)^{(s)}\cap \langle \prod_{j=r+1}^m x_j\rangle.
\]
By convention, if $r=m$, then we define $I_{H,s} = I_{G\setminus H,s} = I(G)^{(s)}$.
\end{definition}

\noindent In general, if $w=x_1^{a_1}\cdots x_m^{a_m}\in \mathcal{G}(I(K_m)^{(s)})$, then $w\in \mathcal{G}(I_{H,s})$ if $a_{r+1}=\cdots=a_m=0$ and $w\in \mathcal{G}( I_{G\setminus H,s})$ if  $a_i\neq 0$ for all $i\in\{r+1,\ldots, m\}$.  Also, observe that $I_{H,s}$ can be viewed as the extension of $I(H)^{(s)} \subset k[x_1, \ldots, x_r]$ to the ring $k[x_1, \ldots, x_n]$.

\begin{lemma}\label{intersection}
If $m \geq 3, s\geq 2, G=K_m$ and $H=K_{m-1}$, then $I_{H,s}\cap I_{G\setminus H,s} = x_mI_{H,s}$.
\end{lemma}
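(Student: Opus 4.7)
The plan is to prove this ideal equality by exploiting a simple structural fact: by construction, every generator in $\mathcal{G}(I_{H,s})$ avoids the variable $x_m$, so intersecting $I_{H,s}$ with any ideal generated by $x_m$ should reduce to multiplication by $x_m$.

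My first step would be the reduction
\[I_{H,s}\cap I_{G\setminus H,s} \;=\; I_{H,s}\cap I(G)^{(s)}\cap\langle x_m\rangle \;=\; I_{H,s}\cap\langle x_m\rangle,\]
which is valid because $I_{H,s}\subseteq I(G)^{(s)}$ (its generators are drawn from $\mathcal{G}(I(G)^{(s)})$). This collapses the problem to showing $I_{H,s}\cap\langle x_m\rangle = x_m I_{H,s}$.

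Next I would handle the two containments. For $\supseteq$, the inclusion $x_m I_{H,s}\subseteq I_{H,s}$ is automatic since $I_{H,s}$ is an ideal of $R$, and $x_m I_{H,s}\subseteq \langle x_m\rangle$ is obvious, so $x_m I_{H,s}$ lies inside both factors. For $\subseteq$, since both sides are monomial ideals it suffices to take a monomial $u\in I_{H,s}\cap\langle x_m\rangle$. Then $x_m\mid u$, and there is a generator $v\in\mathcal{G}(I_{H,s})$ dividing $u$. By the defining condition of $I_{H,s}$, the variable $x_m$ does not divide $v$, so $\gcd(v,x_m)=1$. Writing $u = x_m u'$, the divisibility $v\mid x_m u'$ together with $\gcd(v,x_m)=1$ forces $v\mid u'$, hence $u'\in I_{H,s}$ and $u\in x_m I_{H,s}$.

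The proof is short, and I do not anticipate a real obstacle. The only conceptual step is recognizing that, because $I_{H,s}$ is generated by monomials supported on $\{x_1,\ldots,x_{m-1}\}$, intersecting with $\langle x_m\rangle$ is equivalent to multiplying by $x_m$. I note that the hypotheses $m\geq 3$ and $s\geq 2$ are not actually used in this particular equality (the argument goes through even for $s=1$ or smaller $m$), but they will presumably be needed when the lemma is applied to verify an E-K splitting in the subsequent results.
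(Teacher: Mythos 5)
Your proof is correct and follows the paper's argument exactly: reduce to $I_{H,s}\cap\langle x_m\rangle$ using $I_{H,s}\subseteq I(G)^{(s)}$, then observe that no generator of $I_{H,s}$ involves $x_m$ so the intersection equals $x_m I_{H,s}$. You merely spell out the last step (which the paper calls ``clear''), and your side remark that the hypotheses $m\geq 3$, $s\geq 2$ are not used in this equality is accurate.
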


\begin{proof}
	Note first that $I_{H,s}$ is an ideal contained in $I(G)^{(s)}$, and so $I_{H,s}\cap I(G)^{(s)}=I_{H,s}$.\\
	Thus, $I_{H,s}\cap I_{G\setminus H,s} = I_{H,s}\cap I(G)^{(s)}\cap \langle x_m\rangle = I_{H,s}\cap \langle x_m \rangle$.
	Since no generator of $I_{H,s}$ is divisible by $x_m$, it is clear that $I_{H,s}\cap \langle x_m \rangle=x_m I_{H,s}$.
\end{proof}

We now define an E-K splitting for ideals of the form $I_{K_m\setminus K_r,s}$. The ideal $I(K_m)^{(s)}$ is a special case, and an E-K splitting for it will follow as a corollary of the next theorem. These results are needed as part of the induction for the next section. 

\begin{theorem}\label{further splittings}
Let $m \geq 3, s\geq 2$ and $r \in \{1, \ldots, m \mid r \not = m-s-1\}$. If $G=K_m, H=K_{r}$, 
$$L_1=\langle w\in \mathcal{G}( I_{G\setminus H,s} ): x_{r}\mid w\rangle  =  I_{G\setminus H,s} \cap \langle x_{r}\rangle, \,\,\, \text{ and } \,\,\, L_2=\langle w\in \mathcal{G}( I_{G\setminus H,s} ):x_{r}\nmid w\rangle,$$
then $ I_{G\setminus H,s}=L_1+L_2$ is an Eliahou-Kervaire splitting.
\end{theorem}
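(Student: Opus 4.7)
The plan is to construct an explicit splitting function on $\mathcal{G}(L_1\cap L_2)$ and verify the two axioms of Definition~\ref{EKdef}.

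First, $\mathcal{G}(I_{G\setminus H,s})=\mathcal{G}(L_1)\sqcup\mathcal{G}(L_2)$ is immediate from the defining partition (divisibility by $x_r$ versus not). To understand $L_1\cap L_2$, I would observe that any minimal generator $w=x_1^{a_1}\cdots x_m^{a_m}\in\mathcal{G}(L_1\cap L_2)$ must be divisible by a generator of each of $L_1$ and $L_2$, and so by Proposition~\ref{minGens} the exponents satisfy $a_r\geq 1$ and $a_{r+1},\ldots,a_m\geq 1$. I would then use Proposition~\ref{minGens} to justify the exclusion $r\neq m-s-1$: a direct counting check shows that $r=m-s-1$ is exactly the value at which $I_{G\setminus H,s}$ reduces to $\langle x_{r+1}\cdots x_m\rangle$, forcing $L_1=0$ and preventing a nontrivial splitting.

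Next I would define the splitting function. Given $w\in\mathcal{G}(L_1\cap L_2)$, the monomial $w/x_r^{a_r}$ still lies in $L_2$, so I take $\varphi(w)\in\mathcal{G}(L_2)$ to be a canonical minimal generator of $L_2$ dividing $w/x_r^{a_r}$ (for definiteness, chosen to agree with $w$ on as many coordinates as possible). I then define $\phi(w)\in\mathcal{G}(L_1)$ by requiring its exponent of $x_r$ to equal $a_r$, its exponent of $x_j$ to equal $a_j$ in every coordinate $j\neq r$ where $\varphi(w)$ has exponent strictly less than $a_j$, and its remaining exponents to be chosen minimally subject to the generator identity of Proposition~\ref{minGens}. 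The hypothesis $r\neq m-s-1$ supplies the slack in the sum condition needed for this prescription to land in $\mathcal{G}(L_1)$. Condition~(1) of Definition~\ref{EKdef}, namely $w=\lcm(\phi(w),\varphi(w))$, is built into the construction.

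Condition~(2) for $\varphi$ is transparent: since $x_r$ divides no $\varphi(w)$ but does divide every $w\in\mathcal{G}(L_1\cap L_2)$, the $x_r$-exponent of $\lcm(\varphi(S))$ is strictly smaller than that of $\lcm(S)$ for every subset $S$. Condition~(2) for $\phi$ is where I expect the principal obstacle to lie. Because different $\phi(w)$'s may be missing different variables relative to $w$, strict divisibility of lcms does not follow purely from the singleton case. My strategy is to leverage the identity of Proposition~\ref{minGens} attached to $\phi(w)$, which caps the total weight of $\phi(w)$ in terms of its maximal exponent, to produce, for every subset $S$, a variable on which $\lcm(\phi(S))$ has strictly smaller exponent than $\lcm(S)$. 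I expect this step to require a case analysis on whether the distinguished index $i^*$ for $\phi(w)$ lies in $\{1,\ldots,r\}$ or in $\{r+1,\ldots,m\}$, and once again it is the exclusion $r\neq m-s-1$ that provides the structural slack needed for the argument.
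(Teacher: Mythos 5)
There are two significant problems with your sketch, and they are connected.

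First, your proposed explanation for the hypothesis $r\neq m-s-1$ is incorrect. You claim that at $r=m-s-1$ the ideal $I_{G\setminus H,s}$ collapses to $\langle x_{r+1}\cdots x_m\rangle$ forcing $L_1=0$, and that a zero summand ``prevents a nontrivial splitting.'' But a trivial splitting with $L_1=0$ would still satisfy Definition~\ref{EKdef} vacuously, since $\mathcal{G}(L_1\cap L_2)$ would be empty; so this cannot be what goes wrong. The real issue, which the paper isolates via the example $m=5,s=2,r=2$, is subtler: since $L_1\cap L_2 = x_rL_2$, the monomials in $\mathcal{G}(L_1\cap L_2)$ need not themselves lie in $\mathcal{G}(I(G)^{(s)})$, and when $r=m-s-1$ there is a square-free minimal generator $v=x_{r+1}\cdots x_m\in\mathcal{G}(L_2)$ for which $w=x_rv$ cannot be sent by any reasonable $\phi$ to an element of $\mathcal{G}(L_1)$ --- because removing a variable breaks divisibility by $\prod_{i=r+1}^m x_i$. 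The exclusion $r\neq m-s-1$ is precisely the arithmetic condition ensuring this degenerate square-free case does not arise; it has nothing to do with $L_1$ vanishing.

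Second, your prescription for $\phi$ is not actually well-defined, and the vagueness hides the part of the argument that does the work. Since $L_1\cap L_2 = x_rL_2$, every $w\in\mathcal{G}(L_1\cap L_2)$ has $a_r=1$ exactly and $\varphi(w)=w/x_r$ agrees with $w$ on every coordinate $j\neq r$. Hence the set of $j\neq r$ where $\varphi(w)$ has strictly smaller exponent is empty, and your recipe reduces to: $\phi(w)$ has $x_r$-exponent $1$ and all other exponents ``chosen minimally subject to Proposition~\ref{minGens}.'' This neither determines $\phi(w)$ uniquely nor guarantees that $\phi(w)$ is divisible by $\prod_{i=r+1}^m x_i$ (which is required for membership in $\mathcal{G}(L_1)\subset\mathcal{G}(I_{G\setminus H,s})$). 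The paper's proof instead defines $\phi(w)$ explicitly by dividing $w$ by one or two carefully chosen variables carrying the maximal exponent $a_{\max}$ (with a dichotomy depending on how many coordinates attain $a_{\max}$), and then verifies in each case that $\phi(w)$ still satisfies the exponent identity of Proposition~\ref{minGens} and remains divisible by $\prod_{i=r+1}^m x_i$; the hypothesis $r\neq m-s-1$ is invoked exactly in the second verification to rule out the square-free boundary case. Without an explicit map of this kind, the verification of Condition~(2) for $\phi$ --- which you acknowledge as the main obstacle but leave as a sketch --- cannot even be started, because the ``distinguished index'' you invoke is not defined. Your argument for Condition~(2) on the $\varphi$ side (tracking the $x_r$-exponent) is correct and matches the paper, but that was always the easy half.
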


\begin{proof}
We first note that the minimal monomial generating set of $L_2$ is given by all monomials $w= x_1^{a_1}\cdots x_m^{a_m}\in\mathcal{G}(I(G)^{(s)})$ such that $a_r=0$ and every exponent from $\{a_{r+1},\dots,a_m\}$ is nonzero.
	
To construct a splitting function that sends $w\in \mathcal{G}(L_1\cap L_2)$ to $(\phi(w),\varphi(w))\in \mathcal{G}(L_1)\times \mathcal{G}(L_2)$, we need to define two functions, $\varphi : \mathcal{G}(L_1\cap L_2)\rightarrow \mathcal{G}(L_2)$ and $\phi :\mathcal{G}(L_1\cap L_2)\rightarrow \mathcal{G}(L_1)$. We start with the function $\varphi$. First notice that by a similar argument to Lemma~\ref{intersection},  $L_1\cap L_2 = x_rL_2$ and each $w\in \mathcal{G}(L_1\cap L_2)$ can be written uniquely as $w=x_rv$, where $v\in \mathcal{G}(L_2)$. Therefore, we define $\varphi : \mathcal{G}(L_1\cap L_2)\rightarrow \mathcal{G}(L_2)$ by $w=x_rv\mapsto v.$ 

Next we define $\phi :\mathcal{G}(L_1\cap L_2)\rightarrow \mathcal{G}(L_1)$. Given $w=x_rv\in \mathcal{G}(L_1\cap L_2)$, let $a_i$ denote the exponent of $x_i$ in $w$ for $1\leq i\leq m$. Note that $a_r=1$.  Let $j \in \{1, \ldots, m\} \setminus \{r\}$ be the smallest index such that
$$a_1+\cdots +a_m - a_r -a_j=s, \hspace{2mm} \text{ and } \hspace{2mm} a_j=a_{max}\coloneqq \max (\{a_1,\ldots, a_m\}\setminus \{a_r,a_j\}).$$
Let $t \in \{1,\ldots, m\} \setminus \{j,r\}$ be the smallest index such that $a_t=a_{max}$.  The indices $j$ and $t$ exist by Proposition \ref{minGens}.  Observe that $j$ and $t$ are the two smallest indices in $\{1, \ldots, m\} \setminus \{r\}$ such that $a_t = a_j = a_{max}$ and that $j < t$.  Define $\phi:\mathcal{G}(L_1\cap L_2)\rightarrow \mathcal{G}(L_1)$ by 
$$w=x_rv=x_1^{a_1}\cdots x_m^{a_m}\mapsto \begin{cases}
\cfrac{w}{x_j} & \text{ if } \exists \hspace{0.9mm} \ell\neq j, t, r \text{ with } a_{\ell} = a_{max},\\
\cfrac{w}{x_tx_j} & \text{ otherwise.}
\end{cases}$$

We need to show that $\phi$ does in fact map into $\mathcal{G}(L_1)$. If $a_t$ is the only value in $\{a_1,\ldots,a_m\}\setminus \{a_j,a_r\}$ such that $a_t=a_{max}$, then let $A'=\{x_1,\ldots,x_m\}\setminus \{x_t\}$ and for each $1\leq i\leq m$, let $a_i'$ denote the exponent of $x_i$ in $\phi(w)$. Note that $a_j'=a_j-1, a_t'=a_t-1, a_r'=a_r=1$ and $a_l'=a_l$ for all $l\neq j,t$. Then
\begin{align*}
  \sum_{x_i\in A'}a_i'=\sum_{i=1}^m a_i'- a_t'= \deg(\phi(w))-(a_t-1) & =\deg(w)-2-(a_{max}-1)\\
  &=\sum_{i=1}^m a_i -1 - a_{max}\\
  &=\sum_{i=1}^m a_i -a_r-a_j=s.
\end{align*}
Also,
$$a_{max}'\coloneqq\max(\{a_1',\ldots,a_m'\}\setminus \{a_t'\})=a_j'= a_{max}-1 \ \ \text{and} \ \ a_t'=a_t-1=a_{max}-1=a_{max}'.$$
Thus $\phi(w)\in \mathcal{G}(I(G)^{(s)})$.

To show that $\phi(w) \in \mathcal{G}(I_{G \setminus H, s})$, it suffices to verify that $\prod_{i=r+1}^mx_i$ divides $\phi(w)$.  To this end, assume that $\prod_{i=r+1}^mx_i$ does not divide $\phi(w)$.  Recall that $w = x_rv$ for some $v \in \mathcal{G}(L_2)$.  Since $\mathcal{G}(L_2) \subset \mathcal{G}(I_{G \setminus H,s})$, we know that $\prod_{i=r+1}^m x_i$ divides $v$.  Thus, $\prod_{i=r+1}^mx_i$ divides $w$.  Since $\phi(w) = w/(x_jx_t)$ and $\prod_{i=r+1}^mx_i$ does not divide $\phi(w)$, it must be that at least one of $j$ or $t$ is greater than or equal to $r+1$ and $a_j = a_t = 1$.  Since $a_j=a_t = a_{max}$, this implies that $w$ is a square-free monomial.  Hence,
$$\deg(w) = a_1 + \cdots + a_m = s+2$$
and so $\phi(w)$ has degree $s$.  This is a contradiction since $\phi(w) \in \mathcal{G}(I(G)^{(s)})$ which consists of monomials of degree $s+1$ and higher.  We conclude that $\prod_{i=r+1}^m x_i$ divides $\phi(w)$, and thus $\phi(w) \in \mathcal{G}(I_{G \setminus H, s})$.  Furthermore, since $x_r\mid \phi(w)$, we cannot have $\phi(w)\in \mathcal{G}(L_2)$. Thus, $\phi(w) \in \mathcal{G}(L_1)$.

Similarly, if $a_t$ is not the only value in $\{a_1,\ldots, a_m\}\setminus \{a_j,a_r\}$ such that $a_t=a_{max}$, then define $A'$ and each $a_i'$ as before. Note that $a_j'=a_j-1, a_r=a_r'=1$ and $a_l'=a_l$ for all $l\neq j$. Then
$$\sum_{x_i\in A'} a_i'=\sum_{i=1}^ma_i'-a_t'=\deg(\phi(w))-a_t=\deg(w)-1-a_{max}=\sum_{i=1}^ma_i-a_r-a_j=s.$$
Also,
$$a_{max}'\coloneqq \max (\{a_1',\ldots,a_m'\}\setminus \{a_t'\})=a_{max} \ \ \ \text{and} \ \ \  a_t'=a_t=a_{max}=a_{max}'.$$
Thus, $\phi(w)\in \mathcal{G}(I(G)^{(s)})$.

We need to show that $\phi(w) = w/x_j \in \mathcal{G}(I_{G \setminus H, s})$.  Again, it suffices to verify that $\prod_{i=r+1}^mx_i$ divides $\phi(w)$.  Arguing by contradiction, suppose that $\prod_{i=r+1}^m x_i$ does not divide $\phi(w)$.  As above, $w = x_rv$ for some $v \in \mathcal{G}(L_2)$ and $\prod_{i=r+1}^m x_i$ divides $v$ and hence $w$.  Since $\phi(w) = w/x_j$, it follows that $j \geq r+1$ and $a_j = 1$.  Thus, since $t > j$ and $a_t = a_{max} = a_j$, we have $t > r+1$ and $1 = a_t = a_j = a_{max}$.  This implies that $w$ is a square-free monomial. Further, since the exponents in $\phi(w)$ of the variables in $A'$ sum to $s$ and $a_t'=a_t=1$, $\deg(\phi(w))=s+1$.  By the choice of $j$ and $t$, this implies that for all $1 \leq i < r$ we have $a_i \not = a_{max}$.  Since $w$ is square-free, we must have $a_i = 0$ for $1 \leq i < r$.  But then $w$ is square-free and divisible by $x_r$ and $\prod_{i=r+1}^mx_i$, and so $w = \prod_{i=r}^m x_i$.  Thus $\deg(\phi(w))=(m-r+1)-1=m-r$, and so
$$m-r=s+1 \implies m-s-1 = r,$$
a contradiction to the assumption that $m-s-1 \not = r$.  We conclude that $\prod_{i=r+1}^mx_i$ divides $\phi(w)$, and so $\phi(w) \in \mathcal{G}(I_{G \setminus H,s})$.  Again, since $x_r\mid \phi(w)$, we have that $\phi(w)$ is not in $\mathcal{G}(L_2)$ and thus $\phi(w) \in \mathcal{G}(L_1)$.

We now show that these maps define an E-K splitting. It is easy to verify the first condition by checking that $\lcm(\phi(w),\varphi(w))=w$ when $w\in \mathcal{G}(L_1\cap L_2)$. To check the second condition, we need to verify that for every subset $S \subset \mathcal{G}(J \cap K)$, both $\lcm(\phi(S))$ and $\lcm(\varphi(S))$ strictly divide $\lcm(S)$.  To this end, let $S\subset \mathcal{G}(L_1\cap L_2)$. Clearly, $\lcm(\phi(S))$ and $\lcm(\varphi(S))$ both divide $\lcm(S)$. Since $x_r\mid \lcm(S)$ and $x_r\nmid \lcm(\varphi(S))$, we cannot have $\lcm(\varphi(S))=\lcm(S)$, so $\lcm(\varphi(S))$ strictly divides $\lcm(S)$, as required.

To show $\lcm(\phi(S))\neq \lcm(S)$, let $a$ be the maximum exponent of any variable in any monomial in $S$. Let $i_0$ be the smallest index such that $x_{i_0}^a$ divides at least one $w\in S$. By definition of $\phi$, $x_{i_0}^a\nmid \phi(w)$. If there exists any $w'\in S$ such that $x_{i_0}^a\mid \phi(w')$, then either there exists an exponent in $w'$ that is larger than $a$, or there exists an index $i_1<i_0$ such that $x_{i_1}^a\mid w'$. Both are clear contradictions. Thus, $x_{i_0}^a\nmid \lcm(\phi(S))$, yet it clearly divides $\lcm(S)$, showing that $\lcm(\phi(S))\neq \lcm(S)$ and completing the proof.
\end{proof}

\begin{example}
The assumption that $r \not = m-s-1$ is necessary in Theorem~\ref{further splittings}.  For example, consider $m=5, s=2$ and $r=2$.  By definition, $x_3x_4x_5 \in \mathcal{G}(L_2)$, and so $w = x_2x_3x_4x_5 \in \mathcal{G}(L_1 \cap L_2)$.  Here, $a_{max} = 1$.  The smallest index $j \in \{1, \ldots, 5\} \setminus \{2\}$ such that $a_j = a_{max}$ is 3.  The smallest index $t \in \{1, \ldots, 5\} \setminus \{2,3\}$ such that $a_t = a_{max}$ is 4.  Note that $a_5 = a_{max}$ and $5 \not = j, t, r$.  Thus, $w/x_j = w/x_3 = x_2x_4x_5$ is not in $\mathcal{G}(I_{G \setminus H,s})$ as it is not divisible by $x_3x_4x_5$.
  
\end{example}

\begin{corollary}\label{splitting}
If $m \geq 3, s\geq 2, G=K_m$ and $H=K_{m-1}$, then $I(G)^{(s)} = I_{H,s} + I_{G\setminus H,s}$ is an E-K splitting.
\end{corollary}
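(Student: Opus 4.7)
The plan is to derive Corollary~\ref{splitting} as an immediate consequence of Theorem~\ref{further splittings} by specializing the parameter $r$ to $m$. This specialization is admissible: $r=m$ belongs to $\{1,\ldots,m\}$, and $r=m\neq m-s-1$ because $s\geq 2$ forces $m-s-1\leq m-3<m$. With $r=m$, the ``$H$'' of Theorem~\ref{further splittings} is $K_m=G$ itself, so the convention recorded just before Lemma~\ref{intersection} gives $I_{G\setminus K_m,\,s}=I(G)^{(s)}$. Consequently, Theorem~\ref{further splittings} produces an Eliahou--Kervaire splitting
\[
I(G)^{(s)} = L_1 + L_2,
\]
where $L_1=\langle w\in\mathcal{G}(I(G)^{(s)}):x_m\mid w\rangle$ and $L_2=\langle w\in\mathcal{G}(I(G)^{(s)}):x_m\nmid w\rangle$.

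The remaining step is to identify $L_1$ and $L_2$ with the two summands named in the corollary, now taking $H=K_{m-1}$ (so $r=m-1$ in the corollary's notation, and $\{r+1,\ldots,m\}=\{m\}$). The description of minimal generators immediately following the definition of $I_{H,s}$ says that a monomial $w=x_1^{a_1}\cdots x_m^{a_m}\in\mathcal{G}(I(G)^{(s)})$ lies in $\mathcal{G}(I_{G\setminus H,s})$ precisely when $a_m\neq 0$, and in $\mathcal{G}(I_{H,s})$ precisely when $a_m=0$. Hence $\mathcal{G}(I_{G\setminus H,s})=\mathcal{G}(L_1)$ and $\mathcal{G}(I_{H,s})=\mathcal{G}(L_2)$, so $I_{G\setminus H,s}=L_1$ and $I_{H,s}=L_2$ as monomial ideals. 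Substituting into the splitting above yields $I(G)^{(s)}=I_{H,s}+I_{G\setminus H,s}$ as an E-K splitting, as claimed.

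Since the corollary is essentially a repackaging of the boundary case $r=m$ of Theorem~\ref{further splittings}, no substantive technical obstacle arises. The only point requiring care is the reuse of the letter ``$H$'': in the application of the theorem it denotes $K_m$, while in the statement of the corollary it denotes $K_{m-1}$.
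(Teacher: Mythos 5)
Your proposal is correct and follows the paper's own route exactly: specialize Theorem~\ref{further splittings} to $r=m$, use the convention $I_{K_m\setminus K_m,s}=I(G)^{(s)}$, and then identify the resulting $L_1$ with $I_{K_m\setminus K_{m-1},s}$ and $L_2$ with $I_{K_{m-1},s}$. The only cosmetic quibble is that the observation following the definition of $I_{H,s}$ is stated in the paper as an implication, not a biconditional; but the identification you need ($L_1=I(G)^{(s)}\cap\langle x_m\rangle = I_{K_m\setminus K_{m-1},s}$ and $L_2 = \langle w\in\mathcal{G}(I(G)^{(s)}):x_m\nmid w\rangle = I_{K_{m-1},s}$) follows directly from the equality $L_1 = I_{G\setminus H,s}\cap\langle x_r\rangle$ asserted in the theorem's statement together with the definitions, so this does not affect correctness.
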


\begin{proof}
  By definition, $I(K_m) = I(G)^{(s)} = I_{K_m \setminus K_m,s}$.  Therefore, by Theorem~\ref{further splittings} with $r=m$, we have that $I_{K_m/K_m, s} = L_1 + L_2$ is an E-K splitting where
  $$L_1 = I_{K_m \setminus K_m,s} \cap \langle x_m \rangle = I_{K_m \setminus K_{m-1}, s} \ \ \text{and} \ \ L_2 = I_{K_{m-1} \setminus K_{m-1},s} = I(K_{m-1})^{(s)} = I_{K_{m-1},s}.$$
\end{proof}

\begin{remark}
With these results, if $m - s - 1 < 0$, then we may repeatedly apply the splitting defined in Theorem~\ref{further splittings} to define a function that gives the graded Betti numbers of $I(G)^{(s)}$. In fact, it is an important observation that $L_1$ from Theorem~\ref{further splittings} is actually of the form $I_{K_{m}\setminus K_{r-1},s}$, since $I_{K_{m}\setminus K_{r-1},s} = I_{K_{m}\setminus K_r,s}\cap\langle x_r \rangle$, which allows the theorem to be iteratively applied to each subsequent $L_1$. Each step of this iteration applies Theorem~\ref{further splittings} to $I_{K_m\setminus K_{r},s}$ for decreasing $r$, and terminates with $I_{K_m\setminus K_0,s}$.

\end{remark}

\subsection{Graded Betti Numbers Of Symbolic Powers Of $I(K_2)$ and $I(K_3)$}

We now use splittings and induction to determine the graded Betti numbers for the $s$-th symbolic powers of the edge ideal of $K_3$.  We begin with the following observation.

\begin{lemma}\label{K2}
If $I = I(K_2) \subset R = k[x_1,x_2]$ and $s \geq 2$, then $\beta_{1,2s}(I_{K_2,s})=1$ and $\beta_{i,j}(I_{K_2,s})=0$ for $i \not = 1, j \not = 2s$.
\end{lemma}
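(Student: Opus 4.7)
The plan is simply to make $I_{K_2,s}$ explicit as a principal ideal and then read its minimal free resolution off by inspection. The only actual work is pinning down the single minimal generator.

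By the convention recorded after the definition of $I_{H,s}$ (the case $r = m = 2$), we have $I_{K_2,s} = I(K_2)^{(s)} \subset R = k[x_1,x_2]$. I would then apply Proposition~\ref{minGens} with $m=2$: for a monomial $x_1^{a_1}x_2^{a_2}$ to lie in $\mathcal{L}$, there must exist $i \in \{1,2\}$ with $\sum_{j \neq i} a_j = s$ and $a_i = \max_{j \neq i}\{a_j\}$. Both choices ($i = 1$ and $i = 2$) force $a_1 = a_2 = s$. Hence the unique minimal generator is $x_1^s x_2^s$, and so
$$I_{K_2,s} = \langle (x_1 x_2)^s \rangle.$$

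Because $I_{K_2,s}$ is principal and generated in degree $2s$ over the polynomial ring $R$, the element $(x_1x_2)^s$ is a non-zero-divisor and the minimal graded free resolution of $I_{K_2,s}$ is
$$0 \longrightarrow R(-2s) \xrightarrow{\,\cdot (x_1 x_2)^s\,} I_{K_2,s} \longrightarrow 0.$$
Reading the graded Betti numbers off this resolution directly gives the claim: the only nonzero entry sits in position $(1, 2s)$ with value $1$, and all other $\beta_{i,j}(I_{K_2,s})$ vanish.

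No substantive obstacle arises; the lemma is intended only as a base case for the inductive computation of the graded Betti numbers of $I(K_m)^{(s)}$ using the E-K splitting from Corollary~\ref{splitting} in conjunction with Lemma~\ref{EK_Betti} and Lemma~\ref{recursive}. The sole technical point is the careful specialization of Proposition~\ref{minGens} to $m = 2$, which collapses the defining condition to $a_1 = a_2 = s$ and thereby confirms that the ideal is generated by the single element $(x_1 x_2)^s$.
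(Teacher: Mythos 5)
Your proposal is correct and follows essentially the same route as the paper: identify $I_{K_2,s}=I(K_2)^{(s)}$ as the principal ideal $\langle (x_1x_2)^s\rangle$ and read the graded Betti numbers directly off its two-term resolution (the paper simply asserts $I^{(s)}=\langle x_1^sx_2^s\rangle=I^s$, whereas you spell this out via Proposition~\ref{minGens} with $m=2$, but the argument is the same). One small bookkeeping point you inherit from the paper: you write the resolution $0\to R(-2s)\to I_{K_2,s}\to 0$, in which $R(-2s)$ sits in homological position $0$, so under the paper's stated convention this reads $\beta_{0,2s}(I_{K_2,s})=1$; the statement $\beta_{1,2s}(I_{K_2,s})=1$ matches the resolution of the quotient $R/I_{K_2,s}$ (as the paper itself writes in its proof), so there is a one-off index shift that is already present in the paper's own usage and is not a flaw particular to your argument.
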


\begin{proof}
Notice that $I^{(s)} = \langle x_1^sx_2^s\rangle = I^s$.  It is straightforward to see that a minimal graded free resolution of $I^{(s)}$ (over any field $k$) is given by $0 \rightarrow R(-2s) \rightarrow R \rightarrow  R/I^{(s)}\rightarrow 0$.
\end{proof}

\begin{theorem}\label{K3}
  If $i, j\in\mathbb{Z}^+$ and $s \geq 1$, then we have:
  $$\beta_{1,\frac{3s}{2}}(I(K_3)^{(s)}) = 1; \,\,\, \beta_{2, \frac{3s+3}{2}}(I(K_3)^{(s)}) = 2; \,\,\, \beta_{1,j}(I(K_3)^{(s)}) = 3 \, \,\,\, \text{if $\frac{3s+1}{2} \leq j \leq 2s$};$$
  $$\beta_{2,j}(I(K_3)^{(s)}) = 3 \, \,\,\, \text{if $\frac{3s+4}{2} \leq j \leq 2s+1$}; \,\,\, \text{and} \,\,\,\, \beta_{i,j}(I(K_3)^{(s)}) = 0 \,\,\,\, \text{otherwise}.$$
\end{theorem}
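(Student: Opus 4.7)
The plan is to derive a clean recursion $\beta_{i,j}(I(K_3)^{(s)}) = \beta_{i,j-3}(I(K_3)^{(s-2)}) + (\text{two boundary terms})$ by iterating the splittings of Corollary~\ref{splitting} and Theorem~\ref{further splittings}, and then to finish by induction on $s$ with base cases $s=1$ and $s=2$.

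The base case $s=1$ is the classical edge ideal $I(K_3) = \langle x_1x_2, x_1x_3, x_2x_3 \rangle$, whose minimal free resolution $0 \to R(-3)^2 \to R(-2)^3 \to R \to R/I(K_3) \to 0$ directly matches the claimed values. For $s \geq 2$ I would apply three splittings in succession. First, Corollary~\ref{splitting} gives $I(K_3)^{(s)} = I_{K_2,s} + I_{K_3 \setminus K_2, s}$ with $I_{K_2,s} = \langle x_1^s x_2^s\rangle$ and intersection $x_3\langle x_1^s x_2^s\rangle$ by Lemma~\ref{intersection}. Second, Theorem~\ref{further splittings} applied with $r=2$ gives $I_{K_3 \setminus K_2, s} = I_{K_3 \setminus K_1, s} + L_2$; by Proposition~\ref{minGens} the only generator of $I_{K_3\setminus K_2, s}$ not divisible by $x_2$ is $x_1^s x_3^s$, so $L_2 = \langle x_1^s x_3^s\rangle$ and the intersection is $x_2\langle x_1^s x_3^s\rangle$. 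Third, Theorem~\ref{further splittings} with $r=1$ yields $I_{K_3 \setminus K_1, s} = I_{K_3 \setminus K_0, s} + \langle x_2^s x_3^s\rangle$ with intersection $x_1\langle x_2^s x_3^s\rangle$. The crucial identification is $I_{K_3 \setminus K_0, s} = x_1 x_2 x_3 \cdot I(K_3)^{(s-2)}$, which one reads off from Lemma~\ref{SymbolicGens}: a monomial $x_1^{a_1}x_2^{a_2}x_3^{a_3}$ with all $a_i \geq 1$ lies in $I(K_3)^{(s)}$ if and only if each pairwise sum $a_i+a_j \geq s$, equivalently $x_1^{a_1-1}x_2^{a_2-1}x_3^{a_3-1} \in I(K_3)^{(s-2)}$.

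Combining the three applications of Lemma~\ref{EK_Betti} with three applications of Lemma~\ref{recursive} (for the degree-3 shift by $x_1x_2x_3$) and with Lemma~\ref{K2} for the principal summands $\langle x_i^s x_j^s\rangle$, each splitting step contributes exactly one generator at bidegree $(1,2s)$ (from the principal summand $L_2$) and one first syzygy at bidegree $(2,2s+1)$ (from the intersection $x_r L_2$). This produces the recursion
\[
\beta_{i,j}(I(K_3)^{(s)}) \;=\; \beta_{i,j-3}(I(K_3)^{(s-2)}) \;+\; 3\,[(i,j)=(1,2s)] \;+\; 3\,[(i,j)=(2,2s+1)]
\]
for $s \geq 3$, where $[\,\cdot\,]$ is the Iverson bracket. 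The theorem then follows by induction on $s$ split by parity: the claimed intervals for $\beta_{1,j}$ and $\beta_{2,j}$ are obtained by shifting the intervals for $s-2$ up by $3$ in $j$ and appending the new right endpoints $j = 2s$ and $j = 2s+1$, while the isolated values $\beta_{1,3s/2}=1$ (even $s$) and $\beta_{2,(3s+3)/2}=2$ (odd $s$) propagate unchanged under $s\mapsto s+2$, $j\mapsto j+3$.

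The main obstacle I expect is the careful bookkeeping that no spurious Betti numbers appear beyond $(1,2s)$ and $(2,2s+1)$ at each splitting step, together with the handling of the degenerate case $s=2$: here $I(K_3)^{(0)} = R$ would make the recursion ill-defined, so $s=2$ must be taken as a separate base case. A direct run of the same three splittings for $s=2$ yields $\mathcal{G}(I(K_3)^{(2)}) = \{x_1x_2x_3,\, x_1^2x_2^2,\, x_1^2x_3^2,\, x_2^2x_3^2\}$ and Betti numbers $\beta_{1,3}=1$, $\beta_{1,4}=3$, $\beta_{2,5}=3$, matching the theorem and seeding the induction for even $s$.
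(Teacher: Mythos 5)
Your proposal is correct and follows essentially the same route as the paper: induction on $s$ with base cases $s=1,2$, three successive Eliahou--Kervaire splittings (Corollary~\ref{splitting} followed by Theorem~\ref{further splittings} with $r=2$ then $r=1$), the identification $L_1' = x_1x_2x_3\, I(K_3)^{(s-2)}$ from Lemma~\ref{prod_int}, and the resulting recursion $\beta_{i,j}(I(K_3)^{(s)}) = \beta_{i,j-3}(I(K_3)^{(s-2)}) + 3\beta_{i,j}(I_{K_2,s}) + 3\beta_{i-1,j-1}(I_{K_2,s})$. Your direct identification of the principal summands $L_2$ and $L_2'$ (rather than the paper's change-of-coordinates reduction to $I_{K_2,s}$) and your observation that $s=2$ must be a base case because $I(K_3)^{(0)}=R$ breaks Lemma~\ref{recursive} are fine refinements of bookkeeping, not a different argument.
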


\begin{proof}
We induct on $s$. The result is clear for $s=1$ and $s=2$ via a computation using Macaulay2. Fix $s > 2$ and suppose the function holds for all positive integers $s' < s$. By Corollary \ref{splitting}, there is an E-K splitting of $I(K_3)^{(s)} = I_{K_2,s}+I_{K_3\setminus K_2,s}$, and by Lemma \ref{EK_Betti} we can write
\[\beta_{i,j}(I(K_3)^{(s)})=\beta_{i,j}(I_{K_2,s})+\beta_{i,j}(I_{K_3\setminus K_2,s})+\beta_{i-1,j}(I_{K_2,s}\cap I_{K_3\setminus K_2,s}).\]

Using Lemma \ref{intersection}, we know that $I_{K_2,s}\cap I_{K_3\setminus K_2,s} = x_3I_{K_2,s}$, and by Lemma \ref{recursive}, $\beta_{i-1,j}(x_3I_{K_2,s}) = \beta_{i-1,j-1}(I_{K_2,s})$. Therefore,
\[\beta_{i,j}(I(K_3)^{(s)})=\beta_{i,j}(I_{K_2,s})+\beta_{i,j}(I_{K_3\setminus K_2,s})+\beta_{i-1,j-1}(I_{K_2,s}).\]

We now write $\beta_{i,j}(I_{K_3\setminus K_2,s})$ in terms of the known graded Betti numbers coming from the induction.  Let $L_1$ and $L_2$ be as in Theorem \ref{further splittings} with $m=3$ and $r=2$. Then $I_{K_3\setminus K_2,s} = L_1+L_2$ is an E-K splitting, and therefore
\[\beta_{i,j}(I_{K_3\setminus K_2,s})=\beta_{i,j}(L_1)+\beta_{i,j}(L_2)+\beta_{i-1,j}(L_1\cap L_2).\]

From the proof of Theorem \ref{further splittings}, $L_1\cap L_2 = x_2L_2$ so that $\beta_{i-1,j}(L_1\cap L_2) = \beta_{i-1,j-1}(L_2)$. By a change of coordinates, we have that $\beta_{i,j}(L_2)=\beta_{i,j}(I_{K_2\setminus K_1,s})=\beta_{i,j}(I_{K_2,s})$. Therefore
\[\beta_{i,j}(I_{K_3\setminus K_2,s})=\beta_{i,j}(L_1)+\beta_{i,j}(I_{K_2,s})+\beta_{i-1,j-1}(I_{K_2,s}).\]

It remains to show that $\beta_{i,j}(L_1)$ can be computed using what is known from the induction.   We require one more E-K splitting. By definition, $L_1 = I_{K_3\setminus K_1,s}$, so we can apply Theorem \ref{further splittings} one more time (using $m=3$ and $r=1$) to get a splitting $L_1 = L_1'+L_2'$, where $L_1' = I(K_3)^{(s)}\cap\langle x_1x_2x_3\rangle$.  This yields
\[ \beta_{i,j}(L_1)=\beta_{i,j}(L_1')+\beta_{i,j}(L_2')+\beta_{i-1,j}(L_1'\cap L_2').   \] 

\noindent Using the same observations as before, notice that $\beta_{i,j}(L_2') = \beta_{i,j}(I_{K_2\setminus K_0,s})=\beta_{i,j}(I_{K_2,s})$ and $\beta_{i-1,j}(L_1'\cap L_2') = \beta_{i-1,j-1}(I_{K_2,s})$.
It is not difficult to see that $L_1' = x_1x_2x_3I(K_3)^{(s-2)}$ so that $\beta_{i,j}(L_1') = \beta_{i,j-3}(I(K_3)^{(s-2)})$. 

Overall we have shown that
\[\beta_{i,j}(I(K_3)^{(s)})=3\beta_{i,j}(I_{K_2,s})+3\beta_{i-1,j-1}(I_{K_2,s})+\beta_{i,j-3}(I(K_3)^{(s-2)}).\]

\noindent By Lemma \ref{K2}, we know that $\beta_{i,j}(I_{K_2,s})=1$ if $i=1$, $j=2s$ and $0$ otherwise. The result follows by induction.
\end{proof}

\begin{remark}
We relied on the reduction to $L_1' = x_1x_2x_3I(K_3)^{(s-2)}$ in the proof of Theorem \ref{K3}, and this is why 3 splittings are needed in the proof. See Lemma \ref{prod_int} for a generalization.
  \end{remark}

\subsection{Graded Betti Numbers Of Symbolic Powers Of $I(K_m)$ In General}

In Theorem \ref{K3}, we used the fact that $I(K_3)^{(s)}\cap\langle x_1x_2x_3\rangle = x_1x_2x_3I(K_3)^{(s-2)}$. The reader might wonder why we actually needed 3 splittings in the theorem. Perhaps the induction could be completed using just 2 splittings, for example, requiring a similar identification for $L_1= I(K_3)^{(s)}\cap\langle x_2x_3\rangle$. A look at the generators however shows that we need all of the variables present in the second ideal in the intersection to make such an identification. For example, we know that $\mathcal{G}(I(K_3)^{(3)})= \{x_1^3x_2^3, x_1^2x_2^2x_3, x_1^2x_2x_3^2, x_1x_2^2x_3^2,x_1^3x_3^3,x_2^3x_3^3\}$. If we look at the terms for which $x_2x_3$ can be factored out, we notice that the generator $x_1^2x_2^2x_3$ can be factored as $x_2x_3(x_1^2x_2)$, but $x_1^2x_2$ is not a minimal generator for  $I(K_2)^{(s)}$ or $I(K_3)^{(s)}$ with any choice of $s$.  We avoid this issue by only considering generators where each variable $x_i$ can be factored out. In particular, when computing graded Betti numbers for $I(K_m)^{(s)}$, one will need to use $m$ E-K splittings to reduce to the case $I_{K_m\setminus K_0,s}$ and achieve a similar result to Theorem \ref{K3}. 

\begin{lemma}\label{prod_int}
We have $I_{K_m\setminus K_0,s}= I(K_m)^{(s)} \cap \langle x_1 \cdots x_m \rangle = x_1 \cdots x_m I(K_m)^{(s-m+1)}$ when $s \geq m \geq 2$.
\end{lemma}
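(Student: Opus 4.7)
The plan is to observe that the first equality is immediate from the definition of $I_{G\setminus H,s}$ and then reduce the second equality to a routine application of Lemma~\ref{SymbolicGens}. Taking $r=0$, $G=K_m$, and $H=K_0$ in the definition yields $I_{K_m\setminus K_0,s} = I(K_m)^{(s)}\cap\langle x_1\cdots x_m\rangle$ on the nose, so no work is needed there.

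For the second equality, since all ideals involved are monomial, it suffices to match monomial-by-monomial. Lemma~\ref{SymbolicGens}, applied to the primary decomposition $I(K_m)=\bigcap_{i=1}^{m}\langle x_1,\ldots,\hat{x_i},\ldots,x_m\rangle$ used in the proof of Proposition~\ref{minGens}, characterizes $I(K_m)^{(t)}$ as the set of monomials $x_1^{c_1}\cdots x_m^{c_m}$ with $\sum_{j\neq i}c_j\geq t$ for every $i$.

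The key step is then an exponent shift. A monomial $w=x_1^{a_1}\cdots x_m^{a_m}$ lies in $I(K_m)^{(s)}\cap\langle x_1\cdots x_m\rangle$ if and only if $a_i\geq 1$ for all $i$ and $\sum_{j\neq i}a_j\geq s$ for each $i$. Writing $b_i=a_i-1\geq 0$ and $v=w/(x_1\cdots x_m)$, this translates to the condition $\sum_{j\neq i}b_j\geq s-m+1$ for each $i$, which by Lemma~\ref{SymbolicGens} is precisely the statement that $v\in I(K_m)^{(s-m+1)}$. Since every implication in this translation is reversible, the two ideals coincide.

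The only subtle point is confirming the role of the hypothesis $s\geq m$: it is needed to guarantee $s-m+1\geq 1$, so that $I(K_m)^{(s-m+1)}$ is an honest (proper) symbolic power to which Lemma~\ref{SymbolicGens} applies. I do not anticipate a genuine obstacle here, as the argument reduces to careful bookkeeping of exponents.
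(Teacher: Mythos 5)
Your proof is correct and is essentially the same exponent-shift argument the paper uses, just more detailed. The paper's proof is a one-line observation phrased in terms of minimal generators: if $x_1^{a_1}\cdots x_m^{a_m} \in \mathcal{G}(I(K_m)^{(s)})\cap \langle x_1\cdots x_m\rangle$, then $x_1^{a_1-1}\cdots x_m^{a_m-1} \in \mathcal{G}(I(K_m)^{(s-m+1)})$ — which relies on Proposition~\ref{minGens} and leaves the converse direction implicit. You instead work directly with the monomial membership criterion from Lemma~\ref{SymbolicGens}, which gives both containments simultaneously and avoids needing to re-verify the ``max'' condition in the characterization of $\mathcal{G}(I(K_m)^{(s)})$. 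Your route is arguably cleaner since the shift $a_i \mapsto a_i - 1$ manifestly preserves and reflects the linear inequalities $\sum_{j\ne i} a_j \ge s$, whereas tracking that it preserves membership in the \emph{minimal} generating set requires a touch more care. Both approaches are correct and rest on the same bookkeeping; you have simply made explicit what the paper takes as evident.
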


\begin{proof}
This follows from the observation that if $x_1^{a_1}\cdots x_m^{a_m} \in \mathcal{G}(I(K_m)^{(s)})\cap \langle x_1\cdots x_m\rangle$, then $x_1^{a_1-1}\cdots x_m^{a_m-1} \in \mathcal{G}(I(K_m)^{(s-m+1)})$.
\end{proof}

The underlying ideas in the proof of Theorem \ref{K3} can now be generalized to obtain formulae for the graded Betti numbers for the $s$-th symbolic powers of the edge ideal of $K_m$.  However, as $m$ increases, so does the complexity in writing the formulae.  For the sake of concreteness, we provide only the formulae for $m = 4$ below.   

\begin{theorem}
  If $i, j \in \mathbb{Z}^+$ and $s \geq 4$, then we have:
 $$\beta_{i,j}(I(K_4)^{(s)}) = 6 \,\,\,\, \text{if $i=1$, $j=2s$ or $i=3$, $j=2s+2$}; \,\,\, \beta_{2,2s+1}(I(K_4)^{(s)}) = 12; \,\,\, and$$
$$\beta_{i,j}(I(K_4)^{(s)}) = \beta_{i,j-4}(I(K_4)^{(s-3)}) + 4\beta_{i-1,j-4}(I(K_3)^{(s-2)}) + 4\beta_{i,j-3}(I(K_3)^{(s-2)}) \,\,\,\, \text{otherwise}.$$
\end{theorem}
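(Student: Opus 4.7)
The plan is to induct on $s$, mirroring the proof of Theorem~\ref{K3} but chaining \emph{four} Eliahou--Kervaire splittings instead of three. The base case $s=4$ is verified directly in Macaulay2. For the inductive step, I assume the formula for all smaller admissible $s$.

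The first splitting is Corollary~\ref{splitting} applied to $I(K_4)^{(s)}$, giving $I(K_4)^{(s)}=I_{K_3,s}+I_{K_4\setminus K_3,s}$ with intersection $x_4 I_{K_3,s}$ by Lemma~\ref{intersection}. The remaining three come from Theorem~\ref{further splittings} applied to $I_{K_4\setminus K_r,s}$ at $r=3,2,1$, yielding $I_{K_4\setminus K_r,s}=I_{K_4\setminus K_{r-1},s}+L_2^{(r)}$ with intersection $x_r L_2^{(r)}$; the hypothesis $r\ne m-s-1$ is automatic since $m-s-1=3-s\le -1$ for $s\ge 4$. The central observation is that $L_2^{(r)}$ collects exactly the minimal generators of $I(K_4)^{(s)}$ in which $x_r$ is absent while $x_{r+1},\ldots,x_4$ all appear, so after relabeling variables $L_2^{(r)}\cong I_{K_3\setminus K_{r-1},s}$. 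At the bottom, Lemma~\ref{prod_int} identifies $L_2^{(1)}$ with $x_2x_3x_4\,I(K_3)^{(s-2)}$ and $I_{K_4\setminus K_0,s}$ with $x_1x_2x_3x_4\,I(K_4)^{(s-3)}$, producing the recursive ingredient $\beta_{i,j-4}(I(K_4)^{(s-3)})$ via Lemma~\ref{recursive}.

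Applying Lemma~\ref{EK_Betti} to each of the four splittings, telescoping, and substituting the identity
\[\beta_{i,j}(I_{K_3\setminus K_r,s}) = r\,\beta_{i,j}(I_{K_2,s}) + r\,\beta_{i-1,j-1}(I_{K_2,s}) + \beta_{i,j-3}(I(K_3)^{(s-2)})\quad(r=0,1,2),\]
which is precisely the chain embedded in the proof of Theorem~\ref{K3}, and combining coefficients yields the master formula
\begin{align*}
\beta_{i,j}(I(K_4)^{(s)}) &= \beta_{i,j-4}(I(K_4)^{(s-3)}) + 4\beta_{i,j-3}(I(K_3)^{(s-2)}) + 4\beta_{i-1,j-4}(I(K_3)^{(s-2)})\\
&\quad + 6\beta_{i,j}(I_{K_2,s}) + 12\beta_{i-1,j-1}(I_{K_2,s}) + 6\beta_{i-2,j-2}(I_{K_2,s}).
\end{align*}
By Lemma~\ref{K2}, the three $I_{K_2,s}$ terms are nonzero only at $(i,j)=(1,2s)$, $(2,2s+1)$, $(3,2s+2)$, where they contribute $6, 12, 6$; elsewhere the master formula reduces to the stated three-term recursion. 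At each exceptional point a degree-range check using the inductive hypothesis for $I(K_4)^{(s-3)}$ and Theorem~\ref{K3} for $I(K_3)^{(s-2)}$ shows that the first three terms all vanish (for instance, at $(1,2s)$ the shifted index $2s-4$ exceeds the maximum generator degree $2(s-3)$ of $I(K_4)^{(s-3)}$, and $2s-3$ exceeds the maximum generator degree $2(s-2)$ of $I(K_3)^{(s-2)}$), so the exceptional values reduce cleanly to $6, 12, 6$.

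The main obstacle is the bookkeeping across the four telescoping splittings: identifying each $L_2^{(r)}$ with the correctly indexed $I_{K_3\setminus K_{r-1},s}$ after relabeling, keeping track of the compounded $(i,j)\mapsto(i-1,j-1)$ shifts introduced by the four intersections, and verifying that the substitutions from the K3 formulas combine into the clean coefficients $4, 4, 6, 12, 6$. Once this is organized, the remainder is a routine summation together with the degree checks at the three exceptional points.
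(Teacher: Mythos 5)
Your proof is correct and follows the same approach the paper intends: the paper states the $K_4$ formula is "a direct result of an inductive computation as in Theorem 3.9," and you have carried out exactly that computation by chaining four Eliahou--Kervaire splittings ($r=4,3,2,1$), telescoping via Lemma 2.3, and using Lemma 3.13 to identify $I_{K_4\setminus K_0,s}$ with $x_1x_2x_3x_4\,I(K_4)^{(s-3)}$. Your master formula with coefficients $1,4,4,6,12,6$ is the right expansion, and the vanishing of the first three terms at the exceptional points $(1,2s),(2,2s+1),(3,2s+2)$ follows from the degree/regularity bounds you indicate.
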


While the statement of the formulae for these graded Betti numbers seems cumbersome, they are a direct result of an inductive computation as in Theorem \ref{K3}. 

\begin{remark}
Given any fixed $m>3$, we are able to inductively compute $\beta_{i,j}(I(K_m)^{(s)})$ for any $s>m-1$. If $s<m$, then at some point in the induction process $r=m-s-1$ and we cannot reduce the computation any further, and are left with finitely many Betti numbers to manually compute by other means.
  \end{remark}

\subsection{Minimum Socle Degree Of Symbolic Powers Of Edge Ideals Of Complete Graphs}\label{socle}

Having formulae for the graded Betti numbers of the symbolic powers for edge ideals of complete graphs gives us information on related invariants of the ideals.  For example, we can obtain the minimum socle degrees which we now discuss.  In the following, the dimension of a homogeneous ideal $I \subseteq R=k[x_1, \ldots, x_m]$ is the Krull dimension of $R/I$.  If $A=\oplus_{i\geq 0} A_i$ is a graded Artinian $k$-algebra, then $\mathfrak{m} = \oplus_{i\geq 1} A_i$ is a maximal ideal and we call the ideal quotient $\soc(A) = 0:\mathfrak{m} = \{r \in A \mid r\mathfrak{m} = 0\}$ the {\bf socle} of $A$. It is a finite-dimensional graded $k$-vector space, and we write $\soc(A) = \oplus_{i=0}k(-a_i)$ where $a_i\in\mathbb{Z}^+$ are the {\bf socle degrees} of $A$. The \textbf{minimum socle degree} of $A$ is the minimum of the $a_i$.

If $I$ is a homogeneous ideal of $R$ such that $R/I$ is Cohen-Macaulay, then we can find a maximal regular sequence $f_1,\ldots,f_n$ of $R/I$ where $f_i$ is a homogeneous polynomial of degree 1 and $n = \dim(R/I)$. Let $\bar{I} = I+\langle f_1,\ldots,f_n \rangle$. In this situation, the Artinian reduction $A=R/\bar{I}$ is $0$-dimensional, and hence Artinian. It is well-known that the socle degrees of $A$ are related to the back twists at the end of a minimal resolution of $R/I$.  More precisely, let us write the graded minimal free resolution for $R/I$ as

\[  0\rightarrow F_{m-1} = \bigoplus_i R(-a_i) \rightarrow \cdots \rightarrow F_1\rightarrow R\rightarrow R/I\rightarrow 0.\]

\vspace{2mm}

\noindent The last module in the free resolution for $R/\bar{I}$ is $F_{m-1}(-n)$. Since it is in position $m+n-1$ of the free resolution, the socle degrees of $A$ are $s_i= (a_i +n) - (m+n-1) = a_i -(m-1)$ by \cite[Lemma 1.3]{Kustin-Ulrich}. With a slight abuse of notation, we will say that socle degrees of $A$ are the socle degrees of $R/I$. In particular, the minimum socle degree of $R/I$ is just $\min_{i}\{s_i\}$. This is similar to the setup in \cite{Tohaneanu} and \cite{Tohaneanu-VanTuyl} except for the labelling of indices.

\begin{lemma}\label{CMcomplete}
If $I=I(K_m)\subset R =k[x_1,\ldots,x_m]$, then $R/I^{(s)}$ is Cohen-Macaulay of dimension 1.
\end{lemma}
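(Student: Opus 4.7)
The plan is to reduce everything to the primary decomposition and verify that no embedded primes appear. Since every minimal vertex cover of $K_m$ has cardinality $m-1$, Lemma~\ref{vertexcovers} gives
\[
I = I(K_m) = \bigcap_{i=1}^{m} P_i, \qquad P_i = \langle x_1,\ldots,\widehat{x_i},\ldots,x_m\rangle,
\]
where the $P_i$ are pairwise incomparable primes of height $m-1$, so each lies in $\ma(I)$. By the definition of symbolic power recalled in the introduction, $I^{(s)} = \bigcap_{i=1}^{m} P_i^{s}$. Since $I$ is radical we have $\sqrt{I^{(s)}} = I$, so $\dim(R/I^{(s)}) = \dim(R/I) = m-(m-1) = 1$.

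For the Cohen-Macaulay property, since the dimension is $1$ it suffices to establish $\mathrm{depth}(R/I^{(s)}) \geq 1$. Each $P_i^s$ is $P_i$-primary (as a power of a prime generated by a regular sequence), so the displayed decomposition is a primary decomposition of $I^{(s)}$. It is irredundant: the monomial $x_i^s$ belongs to $P_j^s$ for every $j \neq i$ (because $x_i \in P_j$), yet $x_i^s \notin P_i^s$. Hence $\mathrm{Ass}(R/I^{(s)}) = \{P_1,\ldots,P_m\}$, and the homogeneous maximal ideal $\mathfrak{m} = \langle x_1,\ldots,x_m\rangle$ is not among them. Therefore $\mathfrak{m}$ contains a nonzerodivisor modulo $I^{(s)}$, which gives $\mathrm{depth}(R/I^{(s)}) \geq 1 = \dim(R/I^{(s)})$.

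The only potential subtlety is the irredundancy step, but the explicit witnesses $x_i^s$ make it routine. Alternatively, the whole lemma follows from a single classical citation: $R/I^{(s)}$ is the coordinate ring of the zero-dimensional fat-point subscheme of $\bP^{m-1}$ supported at the $m$ coordinate points with uniform multiplicity $s$, and any zero-dimensional subscheme of projective space is arithmetically Cohen-Macaulay of depth $1$. This aligns with the phrasing ``see Lemma 3.15 for the relevant citations'' used earlier in the paper.
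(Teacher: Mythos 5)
Your proof is correct, but it takes a genuinely different route than the paper. The paper's proof of the Cohen--Macaulay property is a two-line citation to Theorem 3.6 of Rinaldo--Terai--Yoshida and Theorem 2.1 of Varbaro (which treat symbolic powers of Stanley--Reisner ideals of matroid complexes), and then it computes the dimension by a height count, invoking the already-established CM property to say all associated primes share a height (a step that is not really needed, since one could just compute the height of $I$ directly as the minimum height of its minimal primes). You instead give a self-contained argument: you identify $I^{(s)} = \bigcap_{i=1}^m P_i^s$ explicitly from the definition of symbolic power, observe that each $P_i^s$ is $P_i$-primary because $P_i$ is a complete-intersection prime, exhibit the witness $x_i^s$ to show the decomposition is irredundant, and conclude that $\mathrm{Ass}(R/I^{(s)})=\{P_1,\ldots,P_m\}$ contains no prime of height $m$, so $\mathfrak{m}$ contains a nonzerodivisor and $\mathrm{depth}=1=\dim$. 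Every step checks: the identification $I^{(s)}=\bigcap P_i^s$ follows because the $P_i$ are pairwise incomparable, so $I_{\subseteq P_i}=P_i$; the radical computation gives $\dim(R/I^{(s)})=\dim(R/I)=1$; and prime avoidance (together with $\mathrm{depth}\le\dim$) closes the argument. The trade-off is that the paper's citation applies in much greater generality (all symbolic powers of edge ideals whose independence complex is a matroid), while yours is elementary, characteristic-free by inspection, and avoids any external machinery beyond basic primary decomposition. Both are valid; yours is arguably better suited to a reader who only cares about the $K_m$ case.
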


\begin{proof}
The Cohen-Macaulay property follows from \cite[Theorem 3.6]{RTY12} and \cite[Theorem 2.1]{Varbaro}.  It suffices to show that $R/I$ has dimension 1 since  $I^{(s)}$ and $I$ have the same height. Each vertex cover of $K_m$ involves exactly $m-1$ vertices and defines a primary component in the primary decomposition of $I$ by Lemma \ref{vertexcovers}. Since $R/I$ is Cohen-Macaulay, all of the associated primes of $I$ must have the same height, so it suffices to compute the height of just one of these ideals. One such ideal is $\langle x_1,\ldots,x_{m-1}\rangle$ which has height $m-1$ (so $R/I$ has dimension 1), proving the result.
\end{proof}

As a consequence, we can apply our technique to determine the minimum socle degree of the $s$-th symbolic power of an edge ideal of any complete graph for any $s \geq 2$.  For example, the minimum socle degree of $R/I(K_2)^{(s)} = 2s-1$ and the minimum socle degree of $R/I(K_3)^{(3)} = 4$ since, by Theorem \ref{K3}, $\beta_{2,6}(I(K_3)^{(3)}) = 2, \beta_{1,5}(I(K_3)^{(3)}) = \beta_{1,6}(I(K_3)^{(3)}) = \beta_{2,7}(I(K_3)^{(3)}) = 3$ and $\beta_{i,j}(I(K_3)^{(3)}) = 0$ otherwise.

\section{Parallelizations}\label{parallel}

It is natural to ask if we can determine the graded Betti numbers of edge ideals of graphs obtained by certain graph operations.  One such operation is called a graph parallelization, which we now turn our attention to.  The notion of a graph parallelization appears in \cite[Section 2]{MMV11} in the discussion about polarizations and depolarizations of monomial ideals. We continue to work with undirected finite simple graphs.

\begin{definition}
  Let $G$ be a graph with vertex set $\{x_1, \dots, x_m\}$ and fix $\alpha = (\alpha_1, \dots, \alpha_m) \in (\mathbb{Z}^+)^m$. The \textbf{parallelization} of $G$ by $\alpha$, denoted $G^\alpha$, is the graph with vertex set $V(G^\alpha) = \{x_{1,1}, \dots, x_{1,\alpha_1}, \dots, x_{m,1},\dots, x_{m,\alpha_m}\}$ and edge set $E(G^\alpha) = \{\{x_{i,t},x_{j,\ell}\}|\{x_i,x_j\} \in E(G)\}$.
\end{definition}

For example, the graph for $K_3^{(3,1,1)}$ is obtained by duplicating $x_1$ to get the vertices $x_{1,1}, x_{1,2}$ and $x_{1,3}$ and adding edges between these vertices and $x_{2,1}$ and $x_{3,1}$.

\begin{center}
\resizebox{7cm}{!}{
  \begin{tikzpicture}
    \node[main node] (1) {$x_{1,1}$};
    \node[main node] (2) [below left = 2.3cm and 1.5cm of 1]  {$x_{2,1}$};
    \node[main node] (3) [below right = 2.3cm and 1.5cm of 1] {$x_{3,1}$};
    \node[duplicate node] (4) [above right = 2.3cm and 4cm of 2] {$x_{1,2}$};
     \node[duplicate node] (5) [above right = 2.3cm and 6.5cm of 2] {$x_{1,3}$};

    \path[draw,thick]
    (1) edge node {} (2)
    (2) edge node {} (3)
    (3) edge node {} (1)
    (2) edge node {} (4)
    (3) edge node {} (4)
    (2) edge node {} (5)
    (3) edge node {} (5);
    
\end{tikzpicture}}
\end{center}

\noindent In particular, when $\alpha = (1,\ldots, 1)$, we recover the original graph so that $G^{(1,\ldots, 1)}$ is the same as $G$ (we are identifying $x_{i,1}=x_i$ in general). We will denote the set of vertices of $G^\alpha$ corresponding to the vertex $x_i \in V(G)$ by $V_i$. These are called the {\bf duplications} of $x_i$. The next lemma shows that all minimal vertex covers for $G^\alpha$ come from minimal vertex covers for $G$ replaced by the appropriate duplications $V_i$.  Recall that the {\bf open neighbourhood} of a given set $V'$ of vertices in a graph $G$, denoted $N(V')$, is the set of all vertices which are adjacent to vertices in $V'$, not including the vertices in $V'$ themselves.

\begin{lemma}\label{minVertexCov}
Let $G$ be a graph on $m$ vertices and fix $\alpha \in (\mathbb{Z^+})^{m}$. Then any minimal vertex cover for $G^\alpha$ has the form $\{x_{i_1,1},\dots, x_{i_1,\alpha_{i_1}},\dots  x_{i_{r},1} \dots, x_{i_{r},\alpha_{i_r}}\}$ where $\{x_{i_1}, \dots, x_{i_{r}}\}$ is a minimal vertex cover of $G$.
\end{lemma}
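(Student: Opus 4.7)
The plan is to prove the claim in two movements: first showing that the minimality of a vertex cover $W$ of $G^\alpha$ forces an ``all or nothing'' behavior on each duplicated class $V_i$, and then showing that the set of indices $i$ for which $V_i \subseteq W$ corresponds to a minimal vertex cover of $G$.

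For the first movement, I would fix a minimal vertex cover $W$ of $G^\alpha$ and an index $i$, and assume for contradiction that $W \cap V_i$ is a proper nonempty subset of $V_i$. Pick $x_{i,t_0} \in V_i \setminus W$ and $x_{i,t_1} \in V_i \cap W$. The key observation is that every edge of $G^\alpha$ incident to $x_{i,t_0}$ has the form $\{x_{i,t_0}, x_{j,\ell}\}$ with $\{x_i,x_j\} \in E(G)$ and $1 \leq \ell \leq \alpha_j$; since $x_{i,t_0} \notin W$, every such $x_{j,\ell}$ must lie in $W$, hence $V_j \subseteq W$ for each neighbour $x_j$ of $x_i$ in $G$. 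But the edges of $G^\alpha$ incident to $x_{i,t_1}$ are of exactly the same form, so they are already covered by $N(x_i)$'s duplications without using $x_{i,t_1}$. Thus $W \setminus \{x_{i,t_1}\}$ is still a vertex cover of $G^\alpha$, contradicting minimality.

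For the second movement, set $S = \{i : V_i \subseteq W\}$, so by the first step $W = \bigcup_{i \in S} V_i$. I would verify that $\{x_i : i \in S\}$ is a vertex cover of $G$: given $\{x_i,x_j\} \in E(G)$, the edge $\{x_{i,1}, x_{j,1\}}$ lies in $E(G^\alpha)$ and hence is covered by $W$, forcing $i \in S$ or $j \in S$. For minimality, suppose removing some $i_0$ still left a vertex cover of $G$. Then for every edge $\{x_{k,t}, x_{j,\ell}\} \in E(G^\alpha)$ we would have $\{x_k, x_j\} \in E(G)$ covered by $S \setminus \{i_0\}$, so $W \setminus V_{i_0}$ covers $G^\alpha$, contradicting minimality of $W$.

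The main obstacle is the first movement: one has to notice that a partially-included class $V_i$ leads to wasted vertices precisely because the vertices in $V_i$ share the same neighbourhood in $G^\alpha$. Once this interchangeability is isolated, the rest is a clean translation between vertex covers of $G$ and $G^\alpha$ via the assignment $W \mapsto S$. No case analysis on the cardinality of $\alpha_i$ is needed, and the argument is independent of the combinatorial structure of $G$ beyond using Lemma \ref{vertexcovers}'s notion of minimal vertex cover.
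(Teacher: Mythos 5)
Your proof is correct. It rests on the same central observation as the paper's proof: every vertex in $V_i$ has the same neighbourhood $N(V_i)$ in $G^\alpha$, so a minimal cover must include $V_i$ entirely or not at all. The two arguments diverge slightly in how they establish minimality of the induced index set. The paper first proves the forward lift (a minimal vertex cover $S$ of $G$ yields a minimal vertex cover $S'$ of $G^\alpha$) and then invokes that lift to get a contradiction when $C$ is assumed non-minimal. You bypass the forward lift entirely: you show directly that if $S \setminus \{i_0\}$ still covers $G$, then $W \setminus V_{i_0}$ still covers $G^\alpha$, contradicting minimality of $W$. This is a tighter route to exactly the implication the lemma states; the paper's extra lifting direction is correct and gives the full bijective correspondence, but it is not logically required for the statement as phrased.
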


\begin{proof}
Let $S$ be a minimal vertex cover of $G$ and, without loss of generality, suppose that $S = \{x_1, \dots, x_n\}$. Let us define $S' = \{x_{1,1}, \dots, x_{1,\alpha_1}, \dots, x_{n,1}, \dots, x_{n,\alpha_n}\}$ as the set of vertices of $G^\alpha$ obtained by replacing each vertex $x_i$ in $S$ by all of its duplicates $V_i$ in $G^\alpha$. We first show that $S'$ is a minimal vertex cover of $G^\alpha$. 

Let $\{x_{i,t},x_{j,\ell}\}$ be any edge of $G^\alpha$. By definition, $\{x_i,x_j\} \in E(G)$. Since $S$ is a minimal vertex cover of $G$, at least one of $x_i$ or $x_j$ is in $S$. Without loss of generality, let us suppose that $x_i \in S$. Then $V_i\subset S'$, and so $S'$ contains a vertex from the edge $\{x_{i,t},x_{j,\ell}\}$. That is, $S'$ is a vertex cover of $G^\alpha$.

To see that $S'$ is minimal, suppose that there exists $x_{i,t} \in S'$ such that $S'\backslash\{x_{i,t}\}$ is a vertex cover of $G^\alpha$. Then necessarily $N(x_{i,t}) \subseteq S'$. By definition of a parallelization, $N(V_i) = N(x_{i,t})$. Hence, if $e \in E(G^\alpha)$ contains some vertex $x_{i,\ell}$, then $S'$ contains a vertex of $e$ other than $x_{i,\ell}$. That is, $S' \backslash \{V_i\}$ is a vertex cover of $G^\alpha$. However, since $N(V_i) \subseteq S'$, by construction of $S'$ it follows that $N(x_i) \subseteq S$. Then $S\backslash \{x_i\}$ is a vertex cover of $G$, contradicting the minimality of $S$.

It remains to show that these are the only minimal vertex covers of $G^\alpha$. Let $C '$ be a minimal vertex cover of $G^\alpha$. Note that for any $x_{i,t} \in C'$, there exists an edge $e \in E(G^\alpha)$ such that $e=\{x_{i,t},x_{j,\ell}\}$ (since $C'$ is a minimal vertex cover, $x_{i,t}$ cannot be an isolated vertex). However, $\{x_{i,t},x_{j,\ell}\} \in E(G^\alpha)$ if and only if $\{x_i,x_j\} \in E(G)$. So for all $x \in V_i, y \in V_j, \{x,y\} \in E(G^\alpha)$. Furthermore, if there exists some $V_i$ such that $V_i \nsubseteq C'$, then by the above, $N(V_i) \subseteq C'$. Hence, since $C'$ is minimal, $V_i \cap C' = \emptyset$ and so either $V_i \subseteq C'$ or $V_j \subseteq C'$. Thus $C' = V_1 \cup \dots \cup V_t$ for some labelling of the partite sets. Since $G$ is an induced subgraph of $G^\alpha$, $C'$ being a minimal vertex cover implies that $C = \{x_1, \dots, x_t\}= V(G)\cap C'$ is a vertex cover of $G$, and if $C$ were not minimal then the minimal vertex cover contained in $C$ would correspond to a minimal vertex cover of $G^{\alpha}$ properly contained in $C'$, a contradiction.
\end{proof}

\begin{proposition}\label{symbolicPowers}
Let $G$ be a graph with vertex set $\{x_1, \dots, x_m\}$ and edge ideal $I=I(G)$. Fix $\alpha = (\alpha_1, \dots, \alpha_m) \in (\mathbb{Z^+})^m$ and denote the edge ideal of $G^\alpha$ by $I_{\alpha}$. Then 
\[ \mathcal{G}(I_{\alpha}^{(s)}) = \{x_{1,1}^{e_{1,1}}\cdots x_{1,\alpha_1}^{e_{1,\alpha_1}}\cdots x_{m,1}^{e_{m,1}}\cdots x_{m,\alpha_m}^{e_{m,\alpha_m}} | \sum_{j=1}^{\alpha_i}e_{i,j} = a_i, \text{ where $x_1^{a_1}\cdots x_m^{a_m}\in \mathcal{G}(I)$}\}.\] 
\end{proposition}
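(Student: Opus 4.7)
The plan is to reduce the description of $\mathcal{G}(I_\alpha^{(s)})$ to that of $\mathcal{G}(I^{(s)})$ by combining the vertex-cover dictionary of Lemma \ref{minVertexCov} with the numerical symbolic-power criterion of Lemma \ref{SymbolicGens}, so that the ``many variables per $V_i$'' situation collapses to a condition only on the column sums $a_i=\sum_{j=1}^{\alpha_i}e_{i,j}$.

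First I would combine Lemmas \ref{vertexcovers} and \ref{minVertexCov} to write the minimal primary decomposition
\[
I_\alpha \;=\; \bigcap_{W=\{x_{i_1},\ldots,x_{i_r}\}}\Bigl\langle x_{i_1,1},\ldots,x_{i_1,\alpha_{i_1}},\,\ldots,\,x_{i_r,1},\ldots,x_{i_r,\alpha_{i_r}}\Bigr\rangle,
\]
where $W$ ranges over the minimal vertex covers of $G$. Applying Lemma \ref{SymbolicGens} to this decomposition, a monomial $u=\prod_{i,j}x_{i,j}^{e_{i,j}}$ lies in $I_\alpha^{(s)}$ if and only if, for every such $W$, $\sum_{k=1}^{r}\sum_{j=1}^{\alpha_{i_k}}e_{i_k,j}\geq s$, i.e.\ $a_{i_1}+\cdots+a_{i_r}\geq s$ with $a_i=\sum_{j=1}^{\alpha_i}e_{i,j}$. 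Applying Lemma \ref{SymbolicGens} the other way, to the primary decomposition of $I$ itself, this is precisely the condition $v:=x_1^{a_1}\cdots x_m^{a_m}\in I^{(s)}$. So membership in $I_\alpha^{(s)}$ depends only on the column sums $a_i$, and only through membership of $v$ in $I^{(s)}$.

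With that correspondence in place, both inclusions of generating sets come down to a divisibility-refinement argument. For the inclusion ``$\supseteq$'', I would take $v\in\mathcal{G}(I^{(s)})$ and any decomposition $a_i=\sum_j e_{i,j}$; if some $u'=\prod x_{i,j}^{f_{i,j}}$ with $f_{i,j}\leq e_{i,j}$ lay in $I_\alpha^{(s)}$, then $b_i:=\sum_j f_{i,j}\leq a_i$ would give $x_1^{b_1}\cdots x_m^{b_m}\in I^{(s)}$, and minimality of $v$ would force $b_i=a_i$, hence $f_{i,j}=e_{i,j}$. Conversely, if $u\in\mathcal{G}(I_\alpha^{(s)})$ but the associated $v$ were not minimal in $I^{(s)}$, I would pick $v'=x_1^{b_1}\cdots x_m^{b_m}\in\mathcal{G}(I^{(s)})$ strictly dividing $v$ and greedily redistribute: choose $0\leq f_{i,j}\leq e_{i,j}$ with $\sum_j f_{i,j}=b_i$ (possible since $b_i\leq a_i=\sum_j e_{i,j}$). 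Then $u':=\prod x_{i,j}^{f_{i,j}}$ properly divides $u$ and lies in $I_\alpha^{(s)}$, contradicting minimality of $u$.

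The only step with any subtlety is the redistribution, and even there the greedy choice suffices, so no genuine obstacle arises; the bulk of the work is already carried by Lemmas \ref{vertexcovers}, \ref{SymbolicGens}, and \ref{minVertexCov}. The argument is characteristic-free and makes no use beyond the combinatorial translation between vertex covers of $G$ and $G^\alpha$.
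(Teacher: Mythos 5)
Your argument is correct and fleshes out exactly the approach the paper sketches: reduce membership in $I_\alpha^{(s)}$ to a condition on the column sums $a_i=\sum_j e_{i,j}$ via Lemmas \ref{minVertexCov} and \ref{SymbolicGens} (applied to both $I_\alpha$ and $I$), then obtain minimality of the generators by comparing column sums, with your greedy redistribution step supplying the detail the paper leaves implicit. Note that you have, sensibly, read the ``$\mathcal{G}(I)$'' in the displayed statement as $\mathcal{G}(I^{(s)})$, which is clearly what is meant (as the $\alpha=(1,\ldots,1)$ case shows).
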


\begin{proof}
The argument is similar to that of Proposition \ref{minGens}. We use Lemma \ref{SymbolicGens} and Lemma \ref{minVertexCov} to show that the set is generating. Minimality follows from the fact that $x_1^{a_1}\cdots x_m^{a_m}\in \mathcal{G}(I)$ and the description of minimal vertex covers.
\end{proof}

\subsection{Graded Betti Numbers Of Parallelizations}

The graded Betti numbers of parallelizations for complete graphs can be bounded below using the splittings from Corollary \ref{splitting}. This however is a special case of the next result.

\begin{proposition}\label{parallel_bound}
If $G$ is finite simple graph on $m$ vertices, $s \geq 2$, and $\alpha \in (\mathbb{Z^+})^{m}$, then for all $i, j \geq 1$, $\beta_{i,j}(I(G^\alpha)^{(s)}) \geq \beta_{i,j}(I(G)^{(s)})$. 
\end{proposition}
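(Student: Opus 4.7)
The plan is to prove the inequality at the level of multigraded Betti numbers and then sum over multidegrees of a fixed total degree. Let $R = k[V(G)]$ and $R' = k[V(G^\alpha)]$, and define the injection $\iota \colon \mathbb{Z}_{\geq 0}^m \to \mathbb{Z}_{\geq 0}^{|V(G^\alpha)|}$ of multidegrees by $\iota(\mathbf{a})_{x_{i,1}} = a_i$ and $\iota(\mathbf{a})_{x_{i,j}} = 0$ for $j > 1$. My central claim will be that $\beta_{i, \mathbf{a}}(I(G)^{(s)}) = \beta_{i, \iota(\mathbf{a})}(I(G^\alpha)^{(s)})$ for every $\mathbf{a} \in \mathbb{Z}_{\geq 0}^m$ and every $i \geq 0$.

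To establish this, I would invoke the standard Hochster-type formula expressing multigraded Betti numbers of a monomial ideal $I$ via reduced simplicial homology: $\beta_{i, \mathbf{c}}(I) = \dim_k \tilde{H}_{i-1}(\Delta_{<\mathbf{c}}(I); k)$, where $\Delta_{<\mathbf{c}}(I)$ is the simplicial complex on $\mathcal{G}(I)$ whose faces are the nonempty subsets $\sigma$ with $\lcm(\sigma)$ strictly dividing $x^{\mathbf{c}}$. Thus it suffices to produce a simplicial isomorphism $\Delta_{<\iota(\mathbf{a})}(I(G^\alpha)^{(s)}) \cong \Delta_{<\mathbf{a}}(I(G)^{(s)})$.

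To produce this isomorphism, I would unpack Proposition~\ref{symbolicPowers} to determine which minimal generators of $I(G^\alpha)^{(s)}$ can divide $x^{\iota(\mathbf{a})}$. If $w = \prod x_{i,j}^{e_{i,j}} \in \mathcal{G}(I(G^\alpha)^{(s)})$ with $\sum_{j} e_{i,j} = b_i$ for some $x^{\mathbf{b}} \in \mathcal{G}(I(G)^{(s)})$, then $w \mid x^{\iota(\mathbf{a})}$ forces $e_{i,j} = 0$ whenever $j > 1$, hence $e_{i,1} = b_i$ with $\mathbf{b} \leq \mathbf{a}$ coordinatewise. Under the identification $x_{i,1} \leftrightarrow x_i$, this gives a bijection between generators of $I(G^\alpha)^{(s)}$ dividing $x^{\iota(\mathbf{a})}$ and generators of $I(G)^{(s)}$ dividing $x^{\mathbf{a}}$, and since it respects lcm's it induces the required simplicial isomorphism.

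Having proven the multigraded equality, the graded statement would follow by summing. Because $\iota$ is an injection preserving total degree ($|\iota(\mathbf{a})| = |\mathbf{a}|$), for each fixed $j$ I can bound
\[\beta_{i,j}(I(G^\alpha)^{(s)}) = \sum_{|\mathbf{b}| = j} \beta_{i, \mathbf{b}}(I(G^\alpha)^{(s)}) \geq \sum_{|\mathbf{a}|=j} \beta_{i, \iota(\mathbf{a})}(I(G^\alpha)^{(s)}) = \sum_{|\mathbf{a}|=j} \beta_{i, \mathbf{a}}(I(G)^{(s)}) = \beta_{i,j}(I(G)^{(s)}).\]
The main obstacle I anticipate is verifying cleanly, from Proposition~\ref{symbolicPowers}, that the only way a minimal generator of $I(G^\alpha)^{(s)}$ can divide $x^{\iota(\mathbf{a})}$ is to have support entirely in $\{x_{i,1} : 1 \leq i \leq m\}$; once this is in hand, Hochster's formula does the rest of the work.
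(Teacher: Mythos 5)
Your argument is correct, but it takes a genuinely different route from the paper. The paper's proof is a one-liner: it observes that $G$ sits inside $G^\alpha$ as the induced subgraph on $\{x_{1,1},\ldots,x_{m,1}\}$ and then cites Lemma 4.4 of \cite{GHOS18}, which gives monotonicity of the graded Betti numbers of symbolic powers of edge ideals under passing to induced subgraphs. Your proof instead re-derives exactly the instance of that lemma needed here, from scratch. The key ingredients are (i) the fact (obtainable from the Taylor complex tensored with $k$, or equivalently Gasharov--Peeva--Welker plus the crosscut theorem) that $\beta_{i,\mathbf c}(I) = \dim_k \tilde H_{i-1}(\Delta_{<\mathbf c}(I);k)$, and (ii) the generator description of $I(G^\alpha)^{(s)}$ from Proposition~\ref{symbolicPowers}, which shows that the minimal generators dividing $x^{\iota(\mathbf a)}$ are precisely the images under $x_i\mapsto x_{i,1}$ of the minimal generators of $I(G)^{(s)}$ dividing $x^{\mathbf a}$, and that this correspondence is lcm-preserving. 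This yields equality of the relevant subcomplexes and hence of the multigraded Betti numbers along $\iota$, after which summing over a fixed total degree gives the inequality since $\iota$ is a degree-preserving injection and Betti numbers are nonnegative. What your approach buys is self-containment: it does not defer to an external monotonicity theorem and makes transparent that the inequality is just throwing away the contributions from multidegrees not supported on the first duplicate of each vertex. What it costs is that it implicitly reproves a known general fact and invokes a heavier toolbox (multigraded Betti numbers via lcm subcomplexes) that the paper otherwise avoids. Two small stylistic points: in the definition of $\Delta_{<\mathbf c}$ the empty face should be included so that $\tilde H_{-1}$ behaves correctly and $\beta_{0,\mathbf c}$ is computed properly; and note that the displayed statement of Proposition~\ref{symbolicPowers} has a typo ($\mathcal G(I)$ should read $\mathcal G(I^{(s)})$), which you have correctly read through.
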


\begin{proof}
Since we can view $G$ as an induced subgraph of $G^\alpha$, the result follows as a direct consequence of \cite[Lemma 4.4]{GHOS18} (which is a generalization of the work in \cite{HH15} for edge ideals).
\end{proof}

Proposition \ref{parallel_bound} naturally leads one to try to determine what classes of ideals are obtained by parallelization of graphs $G$ where the Betti numbers of $I(G)^{(s)}$ are known.  We illustrate this useful direction with complete $n$-partite graphs.  Recall that the {\bf complete $n$-partite graph} with partite sets of size $a_1, \ldots, a_n$, denoted $K_{a_1,\ldots,a_n}$, is the graph with vertex set $V = \{x_{1,1},\ldots,x_{1,a_1},\dots,x_{n,1},\ldots,x_{n,a_n}\}$ and edge set $E = \{\{x_{i,j},x_{\ell,m}\}\:|\: i \neq \ell\}$.   In addition, recall that an {\bf independent set} of vertices of a graph $G$ is a set of vertices in which no two vertices are adjacent.

\begin{corollary}\label{complete n partite}
If $K_{a_1, \ldots, a_n}$ is a complete $n$-partite graph, then for all $s \geq 2$ and $i,j \geq 1$ we have the bound
  \[\beta_{i,j}(I(K_{a_1, \ldots, a_n})^{(s)}) \geq \beta_{i,j}(I(K_n)^{(s)}). \]
\end{corollary}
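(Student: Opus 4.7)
The plan is to recognize the complete $n$-partite graph $K_{a_1,\ldots,a_n}$ as a parallelization of $K_n$ and then directly invoke Proposition \ref{parallel_bound}. Specifically, I would take $K_n$ with vertex set $\{x_1,\ldots,x_n\}$, fix $\alpha = (a_1,\ldots,a_n) \in (\mathbb{Z}^+)^n$, and compare $K_n^\alpha$ with $K_{a_1,\ldots,a_n}$.

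First, I would verify the vertex sets match. By definition, $V(K_n^\alpha) = \{x_{i,t} : 1 \leq i \leq n,\ 1 \leq t \leq a_i\}$, which after a relabelling is precisely $V(K_{a_1,\ldots,a_n})$. Second, I would check the edge sets. In $K_n$, every distinct pair $\{x_i, x_j\}$ with $i \neq j$ is an edge, so
\[
E(K_n^\alpha) = \{\{x_{i,t},x_{j,\ell}\} : \{x_i,x_j\} \in E(K_n)\} = \{\{x_{i,t},x_{j,\ell}\} : i \neq j\},
\]
which is exactly the edge set of the complete $n$-partite graph (two duplicates of the same $x_i$ form an independent set, and everything else is joined). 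Therefore $K_{a_1,\ldots,a_n} = K_n^\alpha$ as graphs, and hence $I(K_{a_1,\ldots,a_n}) = I(K_n^\alpha)$ as ideals (up to the relabelling of variables).

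Having identified the complete $n$-partite graph as a parallelization, I would apply Proposition \ref{parallel_bound} with $G = K_n$ and $m = n$. Since $n \geq 2$ and the hypotheses $s \geq 2$ and $\alpha \in (\mathbb{Z}^+)^n$ are inherited from the statement, the proposition yields
\[
\beta_{i,j}(I(K_{a_1,\ldots,a_n})^{(s)}) = \beta_{i,j}(I(K_n^\alpha)^{(s)}) \geq \beta_{i,j}(I(K_n)^{(s)})
\]
for all $i,j \geq 1$, which is the desired inequality.

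There is no real obstacle here; the only subtle point is the bookkeeping in identifying $K_{a_1,\ldots,a_n}$ with $K_n^\alpha$, which amounts to checking the edge-set description of a parallelization against the complete $n$-partite structure. Once that identification is in hand, the corollary is an immediate consequence of Proposition \ref{parallel_bound}, which itself rests on the fact that $K_n$ is an induced subgraph of $K_n^\alpha$ via the identification $x_{i,1} = x_i$.
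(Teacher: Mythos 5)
Your proposal is correct and follows essentially the same route as the paper: identify $K_{a_1,\ldots,a_n}$ with the parallelization $K_n^{(a_1,\ldots,a_n)}$ by matching vertex and edge sets, then apply Proposition \ref{parallel_bound}. The only difference is that you spell out the verification of the identification in slightly more detail than the paper does.
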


\begin{proof}
The result follows by noticing that $K_{a_1, \ldots, a_n} = K_n^{(a_1,\ldots, a_n)}$.  To see this, observe that the duplicates of each vertex in $K_n^{(a_1, \ldots, a_n)}$ form an independent set, and any two of these independent sets have all possible edges between them by definition of a parallelization applied to a complete graph.
\end{proof}

As a consequence, the results of Section 3 combined with Corollary \ref{complete n partite} yields explicit lower bounds for the graded Betti numbers of symbolic powers of edge ideals of complete $n$-partite graphs.  

Not surprisingly, the bound of Proposition \ref{parallel_bound} is not very effective as the entries in $\alpha$ grow. A better lower bound would not only depend on $I(G)^{(s)}$, but also on $\alpha$. 
As a motivational example, consider $I(K_3)^{(2)} = \langle \epsilon_1,\epsilon_2,\epsilon_3,\epsilon_4  \rangle=\langle x_1^2x_2^2, x_1^2x_3^2,x_2^2x_3^2, x_1x_2x_3 \rangle$.  One possible relation on these generators is $\sigma= x_3\epsilon_1 - x_1x_2\epsilon_4=0$. Now let $\alpha = (2,2,2)$ and consider $I(K_3^\alpha)^{(2)}$. Then the relation demonstrated by $\sigma$ also holds if we substitute any of the variable duplications. One possible choice is using $x_{3,2}$ instead of $x_{3,1}=x_3$. Now if $\epsilon_4' = x_1x_2x_{3,2}$, then $\sigma' = x_{3,2}\epsilon_1 - x_1x_2\epsilon_4'=0$ also (where $x_1 = x_{1,1}$ and $x_2 = x_{2,1}$). The same would hold true if we replaced any of the variables $x_i$ by one of their duplications $x_{i,j}$. In this way we get groupings of relations corresponding to some choice of the duplicated variables, which is also true for higher syzygies.

We see this more generally.  In particular, if $\sigma$ is a syzygy for the $i$-th step of the minimal graded free resolution of $I(G)^{(s)}$, then it is also a syzygy for the same step of the minimal graded free resolution of $I(G^\alpha)^{(s)}$, and substituting any variable in $\sigma$ with any of its duplicates also yields a syzygy.  We can expect $\prod_i \alpha_i$ many blocks of such relations, and so we might guess that $\beta_{i,j}(I(G^\alpha)^{(s)}) \geq (\prod_i \alpha_i)\beta_{i,j}(I(G)^{(s)})$.  Another lower bound would instead use blocks of relations coming from disjoint copies of $G$ in $G^{\alpha}$ (that is copies of $G$ which partition the vertex set of $G^{\alpha}$). There are exactly $\operatorname{min}\{\alpha_i\}$ many of these, leading to a weaker bound, but likely one which is easier to prove. Instead, we will conjecture that the stronger bound holds.  This bound works over any field since any potential cancellation in positive characteristic would occur both before and after any substitution by duplications.

\begin{conjecture}
Let $G$ be a graph on $m$ vertices and let $\alpha = (\alpha_1, \dots, \alpha_m) \in (\mathbb Z^+)^m$.  For all $i,j \in \mathbb{Z}^+$ and $s \geq 2$, we have $\beta_{i,j}(I(G^\alpha)^{(s)}) \geq (\prod_i\alpha_i)\beta_{i,j}(I(G)^{(s)})$.
\end{conjecture}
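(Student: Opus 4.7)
The natural approach is to exploit the $\prod_i \alpha_i$ sections of the vertex projection $V(G^\alpha)\twoheadrightarrow V(G)$. For each tuple $\tau=(\tau_1,\ldots,\tau_m)\in\prod_i\{1,\ldots,\alpha_i\}$, the map $\iota_\tau: x_i\mapsto x_{i,\tau_i}$ gives a ring inclusion $R\hookrightarrow R^\alpha$ whose image is the polynomial subring on the vertices of an induced copy of $G$ in $G^\alpha$. Since $R^\alpha$ is a free $R$-module via $\iota_\tau$, flat base change identifies the graded Betti numbers of the extended ideal $\iota_\tau(I(G)^{(s)})R^\alpha$ over $R^\alpha$ with those of $I(G)^{(s)}$ over $R$. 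This single-copy bound recovers Proposition \ref{parallel_bound}, and the task is to upgrade it by a factor of $\prod_i\alpha_i$.

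The first step is to pass to multigraded Betti numbers. A minimal syzygy of $I(G)^{(s)}$ of multidegree $\mathbf{a}=(a_1,\ldots,a_m)$ substitutes under $\iota_\tau$ to a minimal syzygy of $I(G^\alpha)^{(s)}$ at the multidegree $\mathbf{a}^\tau\in\mathbb{Z}^{\sum_i\alpha_i}$ concentrated on the coordinates $\{(i,\tau_i):i=1,\ldots,m\}$. When $\mathbf{a}$ has full support, the $\prod_i\alpha_i$ sections yield pairwise distinct multidegrees, and they contribute $\prod_i\alpha_i$ copies of the $\mathbf{a}$-multigraded Betti number of $I(G)^{(s)}$ to that of $I(G^\alpha)^{(s)}$. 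The subtle case is when some $a_{i_0}=0$: sections differing only in $\tau_{i_0}$ yield identical substituted syzygies and contribute only $\prod_{\ell:\,a_\ell>0}\alpha_\ell$ independent pieces. To recover the missing factor $\prod_{\ell:\,a_\ell=0}\alpha_\ell$ one must exhibit additional \emph{hybrid} syzygies of $I(G^\alpha)^{(s)}$ with nonzero entries at the zero coordinates; these are supplied by the enlarged generating set of Proposition \ref{symbolicPowers}, and the counts balance in small cases (for example, with $G=K_3$, $s=2$, $\alpha=(2,1,1)$, the hybrid generators $x_{1,1}x_{1,2}x_{2,1}^2$ and $x_{1,1}x_{1,2}x_{3,1}^2$ compensate exactly for the under-contribution from the zero-supported generator $x_2^2x_3^2$).

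The hard part will be a uniform book-keeping of these hybrid syzygies. One promising route is induction on $\sum_i(\alpha_i-1)$, reducing to the single-duplication step $\alpha\mapsto\alpha'$ with $\alpha'_k=\alpha_k+1$ and $\alpha'_\ell=\alpha_\ell$ otherwise, and applying an Eliahou-Kervaire-type splitting (in the spirit of Theorem \ref{further splittings}) to $I(G^{\alpha'})^{(s)}$ along the new duplicate $x_{k,\alpha_k+1}$; the intersection piece should supply precisely the syzygies that scale the Betti numbers by $(\alpha_k+1)/\alpha_k$. An alternative is to exploit the action of $S_{\alpha_1}\times\cdots\times S_{\alpha_m}$ on $R^\alpha$ which permutes duplicates and leaves $I(G^\alpha)^{(s)}$ invariant; this group acts transitively on the set of $\prod_i\alpha_i$ sections, each contributing a copy of $\Tor^R(I(G)^{(s)},k)$ to $\Tor^{R^\alpha}(I(G^\alpha)^{(s)},k)$, and establishing linear independence of these copies (via multigraded book-keeping) would yield the bound characteristic-freely since the action is just variable renaming. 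Finally, as stated the conjecture appears to require that $G$ have no isolated vertices---the example $G=K_2\cup\{x_3\}$ with $\alpha=(1,1,2)$ has $\beta_{0,2s}(I(G^\alpha)^{(s)})=1<2=\prod_i\alpha_i\cdot\beta_{0,2s}(I(G)^{(s)})$---and the strategy above is understood under this mild hypothesis.
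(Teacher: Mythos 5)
The statement here is a \emph{conjecture}: the paper does not prove it and offers only an informal heuristic (that the $\prod_i\alpha_i$ substituted $S$-polynomials should remain part of a minimal generating set for the syzygy module). There is therefore no ``paper's proof'' to compare against; both the paper and your proposal leave the conjecture open, and you correctly flag this.

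Your proposal is nonetheless a genuine refinement of the paper's sketch. Passing to multigraded Betti numbers and observing that sections $\iota_\tau$ which agree outside a multidegree's support produce \emph{identical} substituted syzygies is precisely the subtlety the paper's naive count $\prod_i\alpha_i$ glosses over, and any correct proof will need the ``hybrid syzygy'' bookkeeping (or something equivalent) that you sketch. Neither the paper nor your proposal carries this bookkeeping through, so the conjecture (suitably corrected, see below) remains open.

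Most importantly, your closing observation about isolated vertices identifies a real gap in the conjecture's hypotheses. If $x_{i_0}$ is isolated in $G$, then all of its duplicates are isolated in $G^\alpha$, so $I(G^\alpha)^{(s)}$ is the extension of $I(G)^{(s)}$ to a polynomial ring with additional free variables, and every graded Betti number is unchanged while the claimed lower bound is multiplied by $\alpha_{i_0}$. Your specific example ($G=K_2\cup\{x_3\}$, $\alpha=(1,1,2)$) works under the paper's indexing convention for $\beta_{i,j}$ (in which, per Lemma \ref{K2}, $\beta_{1,j}$ counts generators); a convention-independent counterexample is $G=K_3\cup\{x_4\}$ with $\alpha=(1,1,1,2)$ and any $s\geq 2$, where the first syzygies of $I(K_3)^{(s)}$ are nonzero but unaffected by duplicating the isolated vertex. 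So the conjecture as stated is false and should carry the hypothesis that $G$ has no isolated vertices (or that $\alpha_i=1$ on isolated vertices), a hypothesis the paper omits. Your remark also makes precise, via the same zero-support mechanism, why that hypothesis alone need not be sufficient without the hybrid-syzygy analysis.
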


One might worry that the relations might not be part of a minimal generating set for the syzygy. However, we know from Gr\"obner theory that there at least $\binom{\prod_i\alpha_i}{2}$ relations which generate the syzygy, and the conjecture simply asks that a fraction of these are part of a minimal generating set. In particular, we know how to find generators for syzygies using $S$-polynomials (for example by Schreyer's Theorem). Information about $S$-polynomials and computing syzygies can be found in \cite{Eisenbud}.  To prove the conjecture, one would need to show that the $S$-polynomials corresponding to each choice of duplication are part of a minimal generating set.

\bibliographystyle{plain}
\bibliography{Library}

\end{document}